\RequirePackage[l2tabu, orthodox]{nag}
\documentclass[11pt]{article}
\usepackage{amsmath,amsthm,amsfonts,amssymb,enumerate,calc,graphicx,float,wrapfig,caption}
\usepackage[UKenglish]{babel}
\usepackage[colorlinks=true,citecolor=black,linkcolor=black,urlcolor=blue]{hyperref}
\usepackage[noabbrev,capitalise]{cleveref}
\usepackage[lmargin=30mm,rmargin=30mm,bmargin=30mm,tmargin=30mm]{geometry}
\usepackage[numbers,sort&compress]{natbib}
\setlength{\parindent}{0cm}
\setlength{\parskip}{2ex}
\renewcommand{\baselinestretch}{1.1}
\usepackage[hang]{footmisc} 

\setlength{\footnotesep}{\baselinestretch\footnotesep}
\renewcommand{\thefootnote}{\fnsymbol{footnote}}	
\setlength{\tabcolsep}{9pt}
\allowdisplaybreaks
\sloppy

\newcommand\DateFootnote{
\begingroup
\renewcommand\thefootnote{}
\footnote{4th August 2016, Revised: \today}
\setcounter{footnote}{0}
\vspace*{-3ex}
\endgroup}

\makeatletter
\renewcommand\section{\@startsection {section}{1}{\z@}%
                                   {-3ex \@plus -1ex \@minus -.2ex}%
                                   {2ex \@plus.2ex}%
                                   {\normalfont\large\bfseries}}
\renewcommand\subsection{\@startsection{subsection}{2}{\z@}%
                                     {-2.5ex\@plus -1ex \@minus -.2ex}%
                                     {1.5ex \@plus .2ex}%
                                     {\normalfont\normalsize\bfseries}}
\renewcommand\subsubsection{\@startsection{subsubsection}{3}{\z@}%
                                     {-2ex\@plus -1ex \@minus -.2ex}%
                                     {1ex \@plus .2ex}%
                                     {\normalfont\normalsize\bfseries}}
 \renewcommand\paragraph{\@startsection{paragraph}{4}{\z@}%
                                    {1.5ex \@plus.5ex \@minus.2ex}%
                                    {-1em}%
                                    {\normalfont\normalsize\bfseries}}
\renewcommand\subparagraph{\@startsection{subparagraph}{5}{\parindent}%
                                       {1.5ex \@plus.5ex \@minus .2ex}%
                                       {-1em}%
                                      {\normalfont\normalsize\bfseries}}
\makeatother

\newcommand{\msn}[1]{MR:\,\href{http://www.ams.org/mathscinet-getitem?mr=MR#1}{#1}}
\newcommand{\MSN}[2]{MR:\,\href{http://www.ams.org/mathscinet-getitem?mr=MR#1}{#1}}
\newcommand{\doi}[1]{doi:\,\href{http://dx.doi.org/#1}{#1}}

\newcommand{\Zbl}[1]{Zbl:\,\href{http://www.zentralblatt-math.org/zmath/en/search/?q=an:#1}{#1}}

\theoremstyle{plain}
\newtheorem{thm}{Theorem}
\newtheorem{lem}[thm]{Lemma}

\newtheorem{cor}[thm]{Corollary}
\newtheorem{prop}[thm]{Proposition}
\newtheorem{claim}[thm]{Claim}
\theoremstyle{definition}

\newcommand{\ceil}[1]{\lceil{#1}\rceil}
\newcommand{\floor}[1]{\lfloor{#1}\rfloor}
\newcommand{\CEIL}[1]{\left\lceil{#1}\right\rceil}

\renewcommand{\geq}{\geqslant}
\renewcommand{\leq}{\leqslant}

\newcommand{\authcite}[1]{\mbox{\citet{#1}}}

\begin{document}

{\Large\bfseries\boldmath\scshape Edge-Maximal Graphs on Surfaces}

\DateFootnote

{\large 
Colin McDiarmid\,\footnotemark[2]
\quad 
David~R.~Wood\,\footnotemark[3]
}

\footnotetext[2]{Department of Statistics, University of Oxford, United Kingdom (\texttt{cmcd@stats.ox.ac.uk}).}

\footnotetext[3]{School of Mathematical Sciences, Monash University, Melbourne, Australia (\texttt{david.wood@monash.edu}). \\
Research supported by  the Australian Research Council.}

\emph{Abstract.} We prove that for every surface $\Sigma$ of Euler genus $g$, every edge-maximal embedding of a graph in $\Sigma$ is at most $O(g)$ edges short of a triangulation of $\Sigma$. This provides the first answer to an open problem of Kainen (1974). 

%%%%%%%%%%%%%%%%%%%%%
\section{Introduction}
\label{Intro}
\renewcommand{\thefootnote}{\arabic{footnote}}

For a graph class $\mathcal{G}$, a graph $G\in\mathcal{G}$ is \emph{edge-maximal} if adding any non-edge to $G$ produces a graph not in $\mathcal{G}$. We emphasise that ``graph'' here means a simple graph with no parallel edges and no loops. A graph class $\mathcal{G}$ is \emph{pure} if $|E(G)|=|E(H)|$ for all edge-maximal graphs $G,H\in\mathcal{G}$  with $|V(G)|=|V(H)|$. For example, each of the following graph classes is pure: 
\begin{wrapfigure}{r}{68mm}
\vspace*{2.5ex}
\centering
\includegraphics{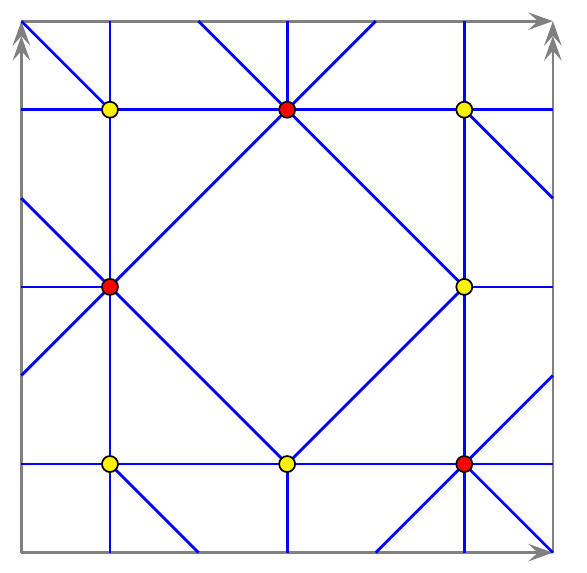}
\captionsetup{width=65mm}
\caption{\label{K8C5} An embedding of $K_8-E(C_5)$ in the torus. Every such embedding has one 4-face, which induces $K_4$, so no non-edge can be added.}
\vspace*{-1ex}
\end{wrapfigure}
forests, outerplanar graphs, planar graphs; and for each positive integer $k$, the $k$-degenerate  graphs, the graphs of treewidth at most $k$, and the chordal graphs with clique number at most $k+1$ (where the last two classes have the same edge-maximal members, the $k$-trees). On the other hand, toroidal graphs are not pure: \authcite{HKSW73} proved that $K_8-E(C_5)$ is an edge-maximal toroidal graph but is not a toroidal triangulation (see \cref{K8C5}). 

Motivated by this example, \authcite{Kainen74} posed the following open problem: by how many edges can an edge-maximal graph embeddable in a given surface fail to be a triangulation? This paper addresses this natural question, which surprisingly has been ignored in the literature. We prove that for every surface $\Sigma$ of Euler genus $g$, every edge-maximal graph embeddable in $\Sigma$ is $O(g)$ edges short of a triangulation (regardless of the number of vertices).

We formulate this result as follows. A graph class $\mathcal{G}$ is \emph{$k$-impure} if $||E(G)|-|E(H)||\leq k$ for all edge-maximal graphs $G,H\in\mathcal{G}$ with $|V(G)|=|V(H)|$. For $h\geq 0$, let $\mathbb{S}_h$ be the sphere with $h$ handles. For  $c\geq 0$, let $\mathbb{N}_c$ be the sphere with $c$ cross-caps. Every surface is homeomorphic to $\mathbb{S}_h$ or $\mathbb{N}_c$. The \emph{Euler genus} of $\mathbb{S}_h$ is $2h$. The \emph{Euler genus} of $\mathbb{N}_c$ is $c$. The \emph{Euler genus} of a graph $G$ is the minimum Euler genus of a surface in which $G$ embeds. See \citep{MoharThom} for definitions and background about graphs embedded in surfaces. The following is our main theorem; see Theorems~\ref{NonOrient} and \ref{orientable} for fuller forms of this result.

\begin{thm} 
\label{Summary}
The class of graphs embeddable in a surface $\Sigma$ of Euler genus $g$ is $O(g)$-impure. 
\end{thm}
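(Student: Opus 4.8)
The plan is to sandwich the number of edges of an edge-maximal graph on $n$ vertices between two bounds that differ by $O(g)$. The upper bound is just Euler's formula: any simple graph embeddable in $\Sigma$ with $n\geq 3$ vertices and $m$ edges satisfies $n-m+f\geq 2-g$ and $3f\leq 2m$ for any embedding, hence $m\leq 3n-6+3g$. So it suffices to prove that every edge-maximal graph $G$ embeddable in $\Sigma$, with $n\geq 3$ vertices and $m$ edges, has $m\geq 3n-6-O(g)$; then any two edge-maximal graphs on $n$ vertices have edge counts within $O(g)$ of each other (the case $n\leq 2$ being trivial), which is Theorem~\ref{Summary}.

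I would first record the structure forced by edge-maximality. $G$ is connected (a component lies inside a single face of the remainder, so a cross-edge can be added inside that face), and bridgeless (if $uv$ is a bridge, a neighbour of $u$ on the side of the bridge not containing $v$ is non-adjacent to $v$ and can be joined to $v$ inside the incident face). Moreover $G$ is $2$-connected: were $v$ a cut vertex with sides $G_1,G_2$, then by additivity of Euler genus over blocks the Euler genus of $G$ equals that of $G_1$ plus that of $G_2$, so amalgamating minimum-genus embeddings of $G_1$ and $G_2$ at $v$ embeds $G$ in a surface of Euler genus at most $g$ with a face meeting both $V(G_1)\setminus v$ and $V(G_2)\setminus v$, and the corresponding non-edge could be added. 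Now fix a minimum-genus embedding of $G$; it is cellular, in a surface $\Sigma'$ of Euler genus $g^{*}\leq g$, so Euler's formula gives $\sum_{F}(|F|-3)=(3n-6+3g^{*})-m=:D$, the sum over all faces $F$. It suffices to show $D=O(g^{*})$, since then $m=3n-6+3g^{*}-D\geq 3n-6-O(g)$.

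The crucial point is that \emph{the set of vertices on the boundary walk of every face is a clique of $G$}: if distinct vertices $a,b$ both lie on the boundary of a face $F$ (a disc) and $ab\notin E(G)$, we may draw $ab$ inside $F$, contradicting edge-maximality. Since $G$ is bridgeless, no edge repeats on a face boundary, so a face whose boundary meets a clique of size $s$ is a closed trail in $K_{s}$ and hence has length at most $\binom{s}{2}$; a short parity argument shows in addition that a non-triangular face meets at least $4$ vertices, with exactly $4$ forcing length $4$. Thus to each non-triangular face $F$ I can attach a clique $K_{s_{F}^{+}}$ with $s_{F}^{+}\geq 5$: if $F$ meets $s_{F}\geq 5$ vertices these span $K_{s_{F}}\subseteq G$; if $F$ is a $4$-face, adding a new vertex inside the disc $F$ joined to its four corners creates a $K_{5}$ while keeping the embedding inside $\Sigma'$. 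Write $G^{+}$ for the resulting (still $\Sigma'$-embedded) graph. A direct computation from $|F|\leq\binom{s_{F}^{+}}{2}$ and $\mathrm{eg}(K_{s})=\Theta(s^{2})$ --- with $\mathrm{eg}(K_{5})=1$ absorbing the $4$-faces --- yields $|F|-3\leq c\cdot\mathrm{eg}(K_{s_{F}^{+}})$ for an absolute constant $c$, so $D\leq c\sum_{F}\mathrm{eg}(K_{s_{F}^{+}})$, summed over the non-triangular faces.

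It then remains to bound $\sum_{F}\mathrm{eg}(K_{s_{F}^{+}})$ by $O(g^{*})$. If the attached cliques were pairwise vertex-disjoint this is immediate: their disjoint union is a subgraph of $G^{+}$, and Euler genus is additive over disjoint unions, so $\sum_{F}\mathrm{eg}(K_{s_{F}^{+}})\leq\mathrm{eg}(G^{+})\leq g^{*}$; in particular this case already forces $n=O(g)$ and hence $D\leq 3n-6+3g^{*}=O(g)$ outright. The main obstacle --- the technical heart of the argument --- is that distinct non-triangular faces may share boundary vertices, even a whole edge, so the attached cliques overlap. To handle this I would invoke additivity of Euler genus over amalgamations along small vertex sets, using that the intersection of two attached cliques is itself a clique and so has $O(\sqrt{g^{*}})$ vertices (it embeds in $\Sigma'$), and that all these cliques sit inside the blocks and faces of a single fixed embedding, so their genus contributions cannot all be charged to the same handles and cross-caps. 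I would expect the orientable and non-orientable cases to diverge precisely at this step, in order to extract the sharp constants of Theorems~\ref{NonOrient} and~\ref{orientable}; for the qualitative $O(g)$ bound a crude amalgamation estimate suffices. Combining the two ends, $3n-6-O(g)\leq m\leq 3n-6+3g$ for every edge-maximal graph on $n$ vertices, which proves the theorem.
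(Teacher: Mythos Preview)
Your framework is right --- the upper bound is Euler's formula, the lower bound is the issue, and the key structural fact is that face boundaries span cliques. But the step you yourself flag as the ``technical heart'' is a genuine gap, not a detail to be filled in by a ``crude amalgamation estimate''. First a minor correction: bridgelessness does \emph{not} prevent an edge from appearing twice on one facial walk once the genus is positive (the paper's own $K_4$ on the torus has a face of length $9>6=|E(K_4)|$), so your bound $|F|\leq\binom{s_F}{2}$ and the ``closed trail / parity'' route to $s_F\geq 4$ are not valid as stated; the correct reason $s_F\geq 4$ is a minimum-degree argument (Lemma~\ref{FourDistinct}). These slips are fixable and not the main issue.

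The real problem is the additivity step. The only general tool available (Theorem~\ref{Additivity}) requires the two subgraphs to share \emph{at most two} vertices; there is no additivity over overlaps of size $O(\sqrt{g})$, and ``their genus contributions cannot all be charged to the same handles and cross-caps'' is an intuition, not an argument. Your attached cliques may pairwise share three, four, or many more vertices, so you cannot simply sum their Euler genera and compare with $g^{*}$. The paper's entire technical contribution is a mechanism to force the two-vertex hypothesis: it chops each non-triangular face into bounded-length pieces, drops a fresh degree-$4$ vertex into each piece (so each sits in its own $K_5$), and then proves --- via a degree-counting argument in an auxiliary bipartite graph (Lemmas~\ref{FindVertex} and~\ref{Recurence}, bounding the function $f_g(s)$) --- that if there are more than $O(g)$ such vertices one can extract an \emph{ordered sequence} of $g+1$ of them, meaning each new $K_5$ meets the union of the earlier ones in at most two vertices. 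Only then does iterated two-vertex additivity (Corollary~\ref{AddCor}) force Euler genus $\geq g+1$, a contradiction. That extraction of an ordered sequence is exactly the work your sketch omits, and without it the argument does not close.
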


To add some perspective to this result, note that several interesting graph classes are not at all pure. Consider, for example, the $K_5$-minor-free graphs. The 8-vertex Mobius ladder is $K_5$-minor-free with 12 edges. Pasting copies of this graph on edges produces a $K_5$-minor-free graph with $n\equiv 2\pmod{6}$ vertices and $(11n-16)/6$ edges. It is edge-maximal with no $K_5$-minor by Wagner's characterisation \citep{Wagner37}. On the other hand, every $n$-vertex edge-maximal planar graph is edge-maximal with no $K_5$-minor, yet has $3(n-2)$ edges. Thus the difference between the number of edges in these two classes of edge-maximal $K_5$-minor-free graphs grows with $n$, and indeed is $\Omega(n)$. In general, $K_t$-minor-free graphs can have as many as $ct\sqrt{\log t}\,n$ edges \citep{Thomason01,Thomason84,Kostochka84}, but there are edge-maximal $K_t$-minor-free graphs, namely $(t-2)$-trees, with only $(t-2)n-\binom{t-1}{2}$ edges (for $n \geq t-1$).

Let $\mathcal{G}_H$ denote the class of graphs not containing $H$ as a minor. \authcite{MP16} proved that (ignoring $K_1$) the only connected graphs $H$ such that $\mathcal{G}_H$ is pure are $K_2$, $K_3$, $K_4$ and $P_3$ (the 3-vertex path). Furthermore, for each connected graph $H$, either $\mathcal{G}_H$ is $k$-impure for some $k$, or there are $n$-vertex graphs $G_n$ and $G'_n$ in $\mathcal{G}_H$ such that $|E(G_n)|-|E(G'_n)|$ is $\Omega(n)$.

%%%%%%%%%%%%%%%%%%%%%
\section{Main Proof}

An embedding of a graph $G$ in a surface is \emph{edge-maximal} if for every non-edge $e$ of $G$, it is not possible to add $e$ to the embedding (without changing the embedding of $G$). Observe that an embedding of a graph $G$ in a surface is edge-maximal if and only if for each face $F$, the set of vertices on $F$ induce a clique in $G$. Also note that a graph $G$ is edge-maximal embeddable in a surface $\Sigma$ if and only if \emph{every} embedding of $G$ in $\Sigma$ is edge-maximal. We mentioned above that Theorems~\ref{NonOrient} and \ref{orientable}  give fuller forms of \cref{Summary}; in fact, they concern edge-maximal embeddings (as well as giving explicit constants). The distinction between edge-maximal embeddings and edge-maximal graphs is exemplified by the following fact. An embedding is \emph{2-cell} (or \emph{cellular}) if each face is homeomorphic to an open disc. 
 
\begin{prop}
\label{planar}
For each surface $\Sigma$, there are infinitely many planar graphs, each with an edge-maximal 2-cell embedding in $\Sigma$.
\end{prop}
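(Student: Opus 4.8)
\emph{Sketch of the intended proof.}
The plan is to produce, for each surface $\Sigma$, infinitely many planar triangulations, each of which carries a $2$-cell embedding of $\Sigma$ in which every face is bounded by a clique; by the criterion recalled just above, such an embedding is edge-maximal. The engine is a pair of local ``gadgets'' that each trade one triangular face of a planar triangulation for an added handle, respectively cross-cap, without spoiling the clique-face property. If $\Sigma\cong\mathbb{S}_0$ there is nothing to prove: every planar triangulation is a $2$-cell embedding with all faces triangles, and there are infinitely many of them. So assume $\Sigma\cong\mathbb{S}_h$ with $h\geq1$, or $\Sigma\cong\mathbb{N}_c$ with $c\geq1$, and set $m:=h$, respectively $m:=c$.

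First I would set up the gadgets. For the orientable case: $K_4$ on $\{a,b,c,d\}$ has a $2$-cell embedding in the torus with exactly two faces, one bounded by the $3$-cycle $abc$ and one bounded by a closed walk of length $9$ through all four vertices; deleting the triangular face gives the \emph{handle gadget}, a torus-with-one-hole whose boundary circle is the $3$-cycle $abc$ and whose single remaining face is a disc with vertex set $\{a,b,c,d\}$. For the non-orientable case: $K_4$ has a $2$-cell embedding in the projective plane possessing a triangular face bounded by $abc$; deleting it gives the \emph{cross-cap gadget}, a M\"obius band with boundary the $3$-cycle $abc$ and two interior disc faces, each with vertex set contained in $\{a,b,c,d\}$. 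I expect this to be the main obstacle: the existence of these two small embeddings --- and especially the claim that after deleting the triangular face the remaining region(s) really are open discs --- is a finite check, cleanest to carry out via explicit rotation systems (with edge signs in the cross-cap case), and it is the one genuinely delicate point.

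Granting the gadgets, take any planar triangulation $T$ with at least $m+1$ faces and pick distinct triangular faces $g_1,\dots,g_m$ of $T$, and stellar-subdivide each $g_j$: insert a new vertex $d_j$ adjacent to the three vertices $a_j,b_j,c_j$ of $g_j$. This produces a planar triangulation $T'$, and the closed discs $D_j$ bounded by $a_jb_jc_j$ and containing $d_j$ have pairwise disjoint interiors, each meeting $V(T')$ only in $d_j$. Now form $\Sigma$ from the sphere by removing the interior of each $D_j$ and gluing a handle gadget (orientable case) or cross-cap gadget (non-orientable case) along the boundary circle $a_jb_jc_j$, identifying $a_j,b_j,c_j$ with the gadget's boundary triangle. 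Each gadget's graph is a copy of $K_4$ on $\{a_j,b_j,c_j,d_j\}$, so the underlying graph of the new embedding is again $T'$; and topologically the new surface is the sphere with $m$ handles added, i.e.\ $\mathbb{S}_m$, respectively with $m$ cross-caps added, i.e.\ $\mathbb{N}_m$ --- which in either case is $\Sigma$. Every face is either a triangular face of $T'$ lying outside all the $D_j$, or an interior face of some gadget; all of these are open discs, so the embedding is $2$-cell, and each has clique vertex set (a triangle, or a subset of some $\{a_j,b_j,c_j,d_j\}\cong K_4$), so the embedding is edge-maximal.

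Finally, to obtain infinitely many graphs: starting from one such $T'$ and its embedding, repeatedly stellar-subdivide a triangular face of $T'$ that is disjoint from all the $D_j$ --- there is always such a face, and the operation produces further ones --- extending the embedding in the obvious way. This gives infinitely many pairwise non-isomorphic planar triangulations, each admitting an edge-maximal $2$-cell embedding in $\Sigma$.
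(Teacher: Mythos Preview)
Your proposal is correct and follows essentially the same approach as the paper: both use $K_4$ embedded in the torus (one triangular face, one face of length $9$) and in the projective plane (two triangular faces, one face of length $6$) as gadgets, glued along triangular faces of a planar triangulation, and both observe that the resulting graph is the stellar subdivision of a planar triangulation and hence planar. The only cosmetic difference is that you perform the stellar subdivision first and then replace each small disc $D_j$ by a punctured gadget, whereas the paper glues the gadget directly to a face of $G_0$ and afterwards remarks that the graph is $G_0$ with $g$ degree-$3$ vertices added; the content is the same.
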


%\begin{proof} 
\emph{Proof.} 
First suppose that $\Sigma=\mathbb{N}_g$.  Let $G_0$ be a triangulation of the sphere with at least $g$ faces. 
Say  $F_1,\dots,F_g$ are distinct faces  of $G_0$.    
Note that $K_4$ has a 2-cell embedding in the projective plane with two triangular 
faces and one face of length 6 (see \cref{K4}).  Let $Q_1,\dots,Q_g$ 
 \begin{wrapfigure}{r}{76mm}
\centering
\includegraphics{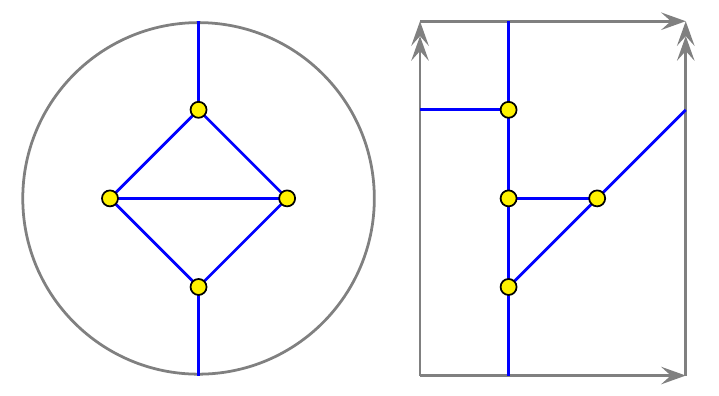}
\captionsetup{width=73mm}
\vspace*{1ex}
\caption{\label{K4} Embeddings of $K_4$ in $\mathbb{N}_1$ and $\mathbb{S}_1$.}
%\vspace*{-1ex}
\end{wrapfigure}
be $g$ copies of this embedding of $K_4$. For $i\in[1,g]$, identify $F_i$ with a triangular face of $Q_i$.  We obtain a graph $G$ embedded in $\mathbb{N}_g$, in which each face induces a clique. Thus this embedding of $G$ is edge-maximal. Note that $G$ is a planar triangulation, since it is obtained from $G_0$ by simply adding a degree-3 vertex inside $g$ faces of $G_0$. An analogous proof works for $\Sigma=\mathbb{S}_h$ since $K_4$ has a 2-cell embedding in the torus with one triangular face and one face of length 9 (see \cref{K4}). 
\qed
%\end{proof}

A \emph{pseudograph} is a graph possibly with parallel edges and loops. 
%The \emph{length} of a face $F$ in an embedded pseudograph is the length of the facial walk (or union of walks) around $F$. Note that vertices and edges may be repeated in this walk. 
A (\emph{pseudograph}) \emph{triangulation} is a 2-cell embedded (pseudo)graph in which each face has length exactly 3. Euler's formula implies that every pseudograph with $n\geq 3$ vertices that embeds in a surface of Euler genus $g$ such that each face has length at least 3 has at most $3(n+g-2)$ edges, with equality if and only if the embedding is a pseudograph triangulation. Of course, every face in an embedding of a graph has length at least 3. Thus every graph with $n\geq 3$ vertices that embeds in a surface of Euler genus $g$ has at most $3(n+g-2)$ edges, with equality if and only if the embedding is a triangulation.  Also note that Euler's formula implies that every bipartite graph with $n\geq 3$ vertices that embeds in a surface of Euler genus $g$ has at most $2(n+g-2)$ edges. 

Given an embedding of an $n$-vertex graph in a surface $\Sigma$ of Euler genus $g$ (where $n+g \geq 3$), we may add edges (if necessary) to obtain a pseudograph triangulation with exactly $3(n+g-2)$ edges. When we say that an edge-maximal embedding in $\Sigma$ or an edge-maximal graph embeddable in $\Sigma$ is ``$k$ edges short of a triangulation'' we mean that it has exactly $3(n+g-2)-k$ edges.

We need the following lemmas about edge-maximal embeddings. The first says that we may restrict our attention to 2-cell embeddings.

\begin{lem}
\label{2cell}
Let $c \geq 3$, and assume that for every surface $\Sigma$ of Euler genus $g$, every edge-maximal 2-cell embedding in $\Sigma$ is at most $cg$ edges short of a triangulation of $\Sigma$. Then for every surface $\Sigma$ of Euler genus $g$, every  edge-maximal embedding in $\Sigma$ is at most $cg$ edges short of a triangulation of $\Sigma$.
\end{lem}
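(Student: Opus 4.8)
The plan is to reduce a general edge-maximal embedding to a collection of edge-maximal 2-cell embeddings in surfaces of smaller Euler genus, and then sum up the deficiencies. Suppose $G$ has an edge-maximal embedding in a surface $\Sigma$ of Euler genus $g$, with $n$ vertices, which is not 2-cell. Then some face $F$ is not an open disc. The standard move is to ``cut along'' a non-contractible simple closed curve lying in $F$, or equivalently to cap off the appropriate boundary components: this decomposes $\Sigma$ (perhaps after several steps, or by a single application to each non-disc face) into one or more surfaces $\Sigma_1,\dots,\Sigma_t$ of Euler genus $g_1,\dots,g_t$ with $\sum_i g_i \leq g$, where the restriction of $G$ to each piece is a 2-cell embedding of a graph $G_i$ in $\Sigma_i$. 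Since adding an edge is a local operation inside a face, and the faces of the $G_i$ are exactly the faces of $G$ (each non-disc face of $G$ having been replaced by one or more disc faces whose boundary vertex sets are subsets of the original), edge-maximality of $G$'s embedding implies edge-maximality of each $G_i$'s embedding; here we use that every face of every $G_i$ still induces a clique in $G$, hence in $G_i$.

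The second ingredient is bookkeeping with Euler's formula. If $G_i$ has $n_i$ vertices, then by hypothesis the 2-cell embedding of $G_i$ in $\Sigma_i$ has at least $3(n_i+g_i-2)-cg_i$ edges. The decomposition identifies the edges of $G$ with the (disjoint) union of the edges of the $G_i$, while a vertex of $G$ may be split into several vertices among the pieces; writing $s$ for the total number of extra vertex-copies created, one has $\sum_i n_i = n + s$ and $\sum_i g_i = g - r$ for some $r\geq 0$ accounting for the genus removed. Then
\begin{align*}
|E(G)| \;=\; \sum_i |E(G_i)| \;&\geq\; \sum_i \bigl(3(n_i+g_i-2)-cg_i\bigr)\\
&=\; 3\Bigl(\sum_i n_i + \sum_i g_i - 2t\Bigr) - c\sum_i g_i\\
&=\; 3(n+s + g - r - 2t) - c(g-r).
\end{align*}
We want this to be at least $3(n+g-2) - cg$, i.e. $3(s - r - 2t) \geq -cg - (-c(g-r)) - 3\cdot\text{(something)}$ — more precisely we need $3s + 3r\cdot(\tfrac{c}{3}-1) \geq 3(2t-2)$ type inequalities. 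The point is that each cutting step either increases $\sum n_i$ or decreases $\sum g_i$ or increases $t$ in a controlled, correlated way (capping a handle costs $2$ in Euler genus but the saved term $-c r$ with $c\geq 3$ dominates the $-3r$ in the triangulation target; splitting a vertex or creating a new component is paid for by the corresponding change in $n_i$ and $t$ via Euler's formula itself, since a 2-cell embedding in a surface of Euler genus $0$ with $n_i<3$ is a degenerate case handled separately). I would organise this as an induction on the number of non-disc faces (or on $g$), reducing one ``defect'' of the embedding at a time and checking the inequality $3(n+g-2)-cg$ is preserved at each step.

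The main obstacle I expect is the careful case analysis of what ``cut along a curve in a non-disc face'' does to the surface and to the vertex/component count — in particular handling a curve that is two-sided versus one-sided, a separating versus non-separating curve, and the small cases $n_i \leq 2$ or $n_i + g_i < 3$ where the clean ``$3(n_i+g_i-2)$'' bound degenerates. One must also make sure the per-step loss in the target quantity $3(n+g-2)-cg$ is nonpositive; this is exactly where the hypothesis $c\geq 3$ is used, since removing Euler genus $\Delta g$ from the surface decreases the triangulation target by $3\Delta g$ but decreases the allowed deficiency by only $c\,\Delta g \geq 3\,\Delta g$, so the inequality goes the right way. Once the reduction is set up correctly, summing the per-piece bounds is routine.
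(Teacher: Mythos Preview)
Your plan is heading in the right direction but is vastly more complicated than necessary, and contains a misconception about the surgery. The paper's proof is a single paragraph: an embedding of $G$ in $\Sigma$ determines a combinatorial embedding (rotation system plus edge signatures), and that combinatorial data in turn determines a $2$-cell embedding of the \emph{same} graph $G$ in some surface $\Sigma'$ of Euler genus $g'\leq g$. The facial walks of this $2$-cell embedding are exactly the facial walks of the original embedding, so edge-maximality is inherited. Applying the hypothesis on $\Sigma'$ gives
\[
|E(G)| \;\geq\; 3(n+g'-2)-cg' \;=\; 3(n-2)-(c-3)g' \;\geq\; 3(n-2)-(c-3)g \;=\; 3(n+g-2)-cg,
\]
where the middle inequality is exactly where $c\geq 3$ is used. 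That is the entire argument: one surface $\Sigma'$, one graph $G$, no decomposition.

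Your surgery picture introduces parameters $t$ (number of pieces) and $s$ (vertex copies) that do not actually arise. Cutting along a simple closed curve lying in a face never touches a vertex, so no vertex is ever ``split'': $s=0$ always. Moreover, an edge-maximal embedding is necessarily connected (if $G$ had two components, a vertex of one lies in a face bounded by the other and an edge could be added), and cutting in a face cannot disconnect a connected graph; so after all the cutting, $G$ sits entirely in one piece and the remaining pieces carry the empty graph. Thus your decomposition collapses to $t=1$, $s=0$, $G_1=G$, $g_1=g'\leq g$, and you recover precisely the paper's one-line computation. The elaborate bookkeeping and the degenerate cases $n_i\leq 2$ you were worried about never occur. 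The moral is to reach for the combinatorial-embedding/rotation-system description first, rather than explicit topological surgery.
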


\begin{proof}
Consider an edge-maximal embedding of a graph $G$ in some surface $\Sigma$ of Euler genus $g$. This embedding defines a combinatorial embedding of $G$, which corresponds to a 2-cell embedding in some surface $\Sigma'$ of Euler genus $g'\leq g$. If a non-edge of $G$ can be added to this embedding in $\Sigma'$, then the same non-edge can be added to the original embedding in $\Sigma$. Since the embedding in $\Sigma$ is edge-maximal, so too is the embedding in $\Sigma'$. By assumption, $G$ is at most $cg'$ edges short of a triangulation in $\Sigma'$. That is, $|E(G)| \geq 3(|V(G)|+g'-2) - cg' =  3(|V(G)|-2) - (c-3)g'\geq 3(|V(G)|-2) - (c-3)g = 3(|V(G)|+g-2) -cg$. That is, $G$ is at most $cg$ edges short of a triangulation in $\Sigma$.
\end{proof}

\begin{lem}
\label{MinDegree3}
%Every edge-maximal graph $G$ embeddable in some surface $\Sigma$ with $n\geq 4$ vertices is 3-connected.
Every graph $G$ with $n \geq 4$ vertices that has an edge-maximal 2-cell embedding in some surface is 3-connected.
\end{lem}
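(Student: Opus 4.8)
The plan is to prove the statement in two stages: first, that $G$ is connected and has minimum degree at least $3$, and second, that $G$ has no cut vertex.

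\textbf{Minimum degree.} Suppose $G$ has a vertex $v$ of degree at most $2$. Since the embedding is $2$-cell and $n\geq 4$, not all of $V(G)$ lies on a single face, so the complement of $v$ is nonempty and contains a non-neighbour of $v$. If $\deg(v)=0$, then $v$ lies in some face $F$; since the embedding is $2$-cell, $F$ is an open disc and we can draw an edge inside $F$ from $v$ to any other vertex on $F$ (and there is one, as $n\geq 4$ and the boundary of a face in a $2$-cell embedding contains a vertex), contradicting edge-maximality. If $\deg(v)=1$ or $2$, then locally around $v$ there are at most two faces incident with $v$, and at least one such face $F$ has $v$ appearing on its boundary walk with the same angle on both sides (when $\deg(v)\le 1$) or, more carefully, one of the faces incident to $v$ has on its boundary a vertex $u$ that is not adjacent to $v$ — otherwise $v$ together with all its at-most-two neighbours would have to dominate every face-neighbour of $v$, forcing $n\le 3$. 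Then $u$ and $v$ lie on a common face $F$, which is an open disc, so the non-edge $uv$ can be added inside $F$, a contradiction. Hence $\delta(G)\geq 3$; in particular $G$ is connected (an isolated component would contradict edge-maximality by adding an edge between components through any face meeting both, but more directly $\delta\ge 3>0$ rules out isolated vertices, and a component with $\ge 2$ vertices embedded in a face of another component yields an addable edge).

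\textbf{$2$-connectedness.} Since the embedding is edge-maximal, every face induces a clique. A well-known consequence (using that each face boundary is a closed walk) is that if the embedding is $2$-cell then $G$ is $2$-connected: a cut vertex $v$ would have two components $A,B$ of $G-v$ such that some face $F$ has boundary walk visiting $v$ and returning, with vertices from both $A$ and $B$ on $F$ — but then those vertices would have to be adjacent, contradicting that $A$ and $B$ are separated by $v$. The subtlety is to argue that a cut vertex forces a face straddling both sides; this follows because in a $2$-cell embedding the face boundary walks, summed, traverse each edge exactly twice, so the ``rotation system'' around $v$ cannot consistently split into the $A$-part and the $B$-part without some face boundary walk passing from one part to the other at $v$.

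\textbf{$3$-connectedness.} Now suppose $\{u,v\}$ is a $2$-cut, with $G-\{u,v\}$ having components with vertex sets $A$ and $B$ (at least two parts). As before, consider the faces of the embedding. If no face is incident with a vertex of $A$, then $A$ cannot be embedded, so some face $F$ is incident with a vertex $a\in A$. The boundary walk of $F$ lies in $G[A\cup\{u,v\}]$ until it would leave $A$, which it can only do through $u$ or $v$. If $F$ is also incident with a vertex $b\in B$, then $a,b$ lie on the common face $F$ and so $ab\in E(G)$, contradicting that $A,B$ are in different components of $G-\{u,v\}$. Hence every face incident with a vertex of $A$ has all its vertices in $A\cup\{u,v\}$; likewise every face incident with a vertex of $B$ has all vertices in $B\cup\{u,v\}$. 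Thus the faces split into those ``on the $A$-side'' and those ``on the $B$-side'', and a standard cutting/gluing argument along $u,v$ shows the embedding is not $2$-cell, or else produces an addable chord: concretely, since each face touching $A$ induces a clique within $A\cup\{u,v\}$ and similarly for $B$, the edge $uv$ together with the structure forces that $u$ and $v$ have, on the $A$-side, a face containing both (as $A$-side faces tile a surface-with-boundary whose boundary is made of $u$–$v$ segments), giving the non-edge $uv$ (if it is a non-edge) addable on the $B$-side as well — the contradiction coming from the fact that in a $2$-cell embedding a $2$-cut must lie on a common face.

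The main obstacle is making the topological ``separation along the $2$-cut'' rigorous: precisely, showing that in a $2$-cell embedding, if $\{u,v\}$ separates $A$ from $B$, then some face is incident with vertices of both $A$ and $B$, \emph{or} $u,v$ lie on a common face (so the non-edge $uv$ is addable, or $uv$ is an edge and one of $A,B$ can be re-embedded to create an addable edge). I expect to handle this by a careful analysis of the face-boundary walks through $u$ and $v$ using the rotation system, i.e., the combinatorial (rather than purely topological) description of the $2$-cell embedding.
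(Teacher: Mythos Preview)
Your outline is reasonable but the 3-connectedness step is not a proof --- you admit as much in your last paragraph. Saying ``a standard cutting/gluing argument along $u,v$ shows the embedding is not 2-cell, or else produces an addable chord'' and ``I expect to handle this by a careful analysis'' leaves the main difficulty unaddressed. Here is what such an analysis actually requires. Once you know each face is a clique, any face meeting $A$ has all its vertices in $A\cup\{u,v\}$, and likewise for $B$. Since $G$ is 2-connected, both $u$ and $v$ must have neighbours in both $A$ and in $B$ (otherwise one of them alone would be a cut vertex). Now look at the cyclic rotation at $u$: it contains edges into $A$, edges into $B$, and at most one edge $uv$. A cyclic sequence containing both $A$-type and $B$-type entries has at least two positions where the type changes; the single possible edge $uv$ can sit in at most one such gap. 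Hence some angle at $u$ lies directly between an edge $ua$ with $a\in A$ and an edge $ub$ with $b\in B$, and the face at that angle contains both $a$ and $b$; since that face is a clique, $ab\in E(G)$, contradicting that $A$ and $B$ lie in different components of $G-\{u,v\}$. That rotation-counting step is exactly the content your sketch is missing. (Your minimum-degree paragraph is also looser than it should be: the degree-$2$ case deserves an explicit facial-walk or rotation argument rather than the vague ``forcing $n\leq 3$''.)

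The paper sidesteps this topological case analysis by a different and cleaner route. After establishing $\delta(G)\geq 3$ (with an explicit rotation-system insertion for the degree-$2$ case), it shows that $G$ is \emph{locally Hamiltonian}: for each vertex $v$ with rotation $(vv_1,\ldots,vv_d)$, the face at the angle between $vv_i$ and $vv_{i+1}$ contains $v_i$ and $v_{i+1}$ and hence forces $v_iv_{i+1}\in E(G)$, so $v_1v_2\cdots v_d$ is a Hamilton cycle in $G[N(v)]$. A short purely combinatorial argument then finishes: if $\{u,v\}$ were a separator with parts $A,B$, pick neighbours $a\in A$ and $b\in B$ of $v$; the Hamilton cycle on $N(v)$ yields two internally disjoint $a$--$b$ paths in $G-v$, each of which must pass through $u$, which is impossible. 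No face-splitting or cutting/gluing is needed.
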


\begin{proof}
$G$ is connected since the embedding is edge-maximal and Euler genus is additive on components and blocks \citep{MoharThom}. If $G$ contains a vertex $v$ of degree $1$ and $vw$ is the edge incident to $v$, then $w$ has a distinct neighbour, so the facial walk starting with $vw$ is followed by $wx$ for some $x\not\in\{v,w\}$, and the edge $vx$ can be added to $G$, contradicting the edge-maximality of the embedding of $G$. Thus $G$ has minimum degree at least 2. Let $\pi_v$ denote the cyclic ordering of edges incident to each vertex $v$ in an embedding of $G$ in $\Sigma$. 

%Suppose that $G$ is not connected. Let $G_1$ be a connected component of $G$. Let $G_2:=G-V(G_1)$. Let $G'$ be obtained from $G$ by adding an edge $vw$ between $G_1$ and $G_2$. The blocks of $G'$ are the blocks of $G$ plus the bridge $vw$. Since the Euler genus of $G'$ equals the sum of the Euler genus' of the blocks of $G'$, and the Euler genus of $K_2$ equals 0, the Euler genus of $G'$ equals the Euler genus of $G$. 

Suppose $G$ contains a vertex $v$ of degree $2$. Let $u$ and $w$ be the neighbours of $v$. We may assume that the edges $uv$ and $vw$ have signature +1. For clarity, observe that the edge $uw$ must be in $G$, with signature +1, since if not we could add it. Since $G$ is connected and $n \geq 4$, at least one of $u$ and $w$, say $w$, has a neighbour not in $\{u, v, w\}$.  Consider the cyclic order $\pi_w$: if $wu$ follows $wv$ then let $wx$ be the edge preceding $wv$, else let $wx$ be the edge following $wv$. Note that $x$ is not in $\{u, v, w\}$. We can add the edge $vx$, with signature +1, as follows.  Insert $vx$ in $\pi_v$ after $vw$ and insert $xv$ in $\pi_x$ before $xw$.  The original facial walk $W$ starting $x w v u \ldots$ is replaced by two facial walks $W_1 = x w v x$ and $W_2 = x v u \ldots$ where $W_2$ is obtained from $W$ by replacing the two-edge path $x w v$ by the single edge $x v$. By maximality, $G$ has minimum degree at least 3. 

%By maximality, the edge $uw$ is in $G$. Without loss of generality, $\deg(w)>2$ (since $G$ is connected and $n >3$).  Consider the facial walk starting with $uv$ then $vw$.  At $w$ at most one of the angles is $\{wv, wu\}$ so in at least one of clockwise/anticlockwise modes the next edge is not $wu$, so it is $wx$ for some $x\not\in\{u,v\}$; and we can add the edge $vx$.

Now we prove that for each vertex $v$ the subgraph induced on $N(v)$ has a Hamilton cycle ($G$ is ``locally Hamiltonian'').  Without loss of generality, the edges incident to $v$ have signature +1. Let $(vv_1, vv_2,\ldots,vv_d)$ be the cyclic ordering of the edges incident to $v$, where $d\geq 3$. We claim that $v_1v_2\ldots v_d$ is a cycle.  For suppose that say $v_1$ and $v_2$ are not adjacent. If $F$ is the face with facial walk starting $(v_1v, vv_2, \ldots)$, then we can add the edge $v_1v_2$ across $F$, which is a contradiction. Thus $G$ is locally Hamiltonian. 

Finally, any connected locally Hamiltonian graph is $3$-connected. Clearly $G$ cannot have a separating vertex.  Suppose $G$ has a separating pair of vertices $u, v$.  Thus $V(G) \setminus \{u,v\}$ can be partitioned into two non-empty parts $U$ and $W$ such that there are no $U$--$W$ edges.  Then $v$ must have a neighbour $a \in U$ and $b \in W$ (otherwise $u$ is a separating vertex) and there are two internally disjoint $ab$-paths in $G -v$ (around a Hamilton cycle in $N(v)$).  But both paths must go through $u$, a contradiction. Hence $G$ is 3-connected. 
\end{proof}

\begin{lem}
\label{FourDistinct}
%Let $G$ be an edge-maximal graph embedded in a surface $\Sigma$ with at least four vertices. Then every non-triangular face $F$ of $G$ contains four distinct vertices that are consecutive on the facial walk. Furthermore, for each string of six vertices that are consecutive on the facial walk, at least one of the three substrings of length 4 consists of distinct vertices.
Let $G$ be a graph with at least four vertices that has an edge-maximal 2-cell embedding in a surface. Then every non-triangular face contains four distinct vertices that are consecutive on the facial walk. Furthermore, for each string of six vertices that are consecutive on the facial walk, at least one of the three substrings of length 4 consists of distinct vertices.
\end{lem}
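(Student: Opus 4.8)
The plan is to extract everything from the structure of a facial walk $W=(w_1,\dots,w_\ell)$ of a face $F$ (indices read cyclically), using only that $G$ is simple, that $G$ has minimum degree at least $3$ (by \cref{MinDegree3}, since $n\geq 4$), and one ``no short repeat'' fact isolated below. First I would record two elementary facts: \emph{(i)} $w_i\neq w_{i+1}$ for all $i$, since $G$ has no loops; and \emph{(ii)} $w_i\neq w_{i+2}$ for all $i$. For \emph{(ii)}: if $w_i=w_{i+2}$ then $W$ enters and leaves the vertex $w_{i+1}$ along one and the same edge $w_iw_{i+1}$, but the edge following a given edge $e$ at a vertex $v$ in a facial walk is a rotational neighbour of $e$ at $v$, hence equals $e$ only when $\deg(v)=1$, which is excluded. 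The only consequence of \emph{(i)}+\emph{(ii)} that I actually need is that, among the six pairs of vertices in a block $(w_j,w_{j+1},w_{j+2},w_{j+3})$, the sole pair not already forced to be unequal is $\{w_j,w_{j+3}\}$; hence \emph{such a block consists of four distinct vertices if and only if $w_j\neq w_{j+3}$}.

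The crux is the claim that \emph{no facial walk has six consecutive entries of the form $x,y,z,x,y,z$} (here $x,y,z$ are automatically distinct by \emph{(i)}+\emph{(ii)}). Suppose $w_j\cdots w_{j+5}=x,y,z,x,y,z$, so $\ell\geq 6$. Within this stretch the edge $xy$ is traversed twice by $W$, as is $yz$; since every edge of an embedded graph is traversed exactly twice in total over all facial walks, each of $xy$ and $yz$ is traversed by $W$ precisely at those positions and borders no face other than $F$. Now examine the vertex $y$: at its occurrence $w_{j+1}$ the walk $W$ enters $y$ along $xy$ and leaves along $yz$, and the same holds at its occurrence $w_{j+4}$; and these two occurrences are distinct positions of $W$ because $\ell\geq 6$. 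The facial walk of the disc $F$ meets each corner of $F$ exactly once, so $F$ has two distinct corners at $y$, each bounded by the edge-pair $\{xy,yz\}$. But a corner at $y$ is bounded by two cyclically consecutive edges of the rotation at $y$, and when $\deg(y)\geq 3$ the pair $\{xy,yz\}$ is cyclically consecutive for at most one corner --- contradicting $\deg(y)\geq 3$. This proves the claim.

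Both assertions then follow quickly. For the first, let $F$ be non-triangular. If $w_j\neq w_{j+3}$ for some $j$, that block is the desired quadruple. Otherwise $w_i=w_{i+3}$ for all $i$, which forces $3\mid\ell$ and $W=(w_1,w_2,w_3)^{\ell/3}$ with $w_1,w_2,w_3$ distinct; non-triangularity gives $\ell/3\geq 2$, while $\ell/3\geq 3$ is impossible since then the edge $w_1w_2$ would be traversed at least three times. Hence $\ell=6$ and $W=(w_1,w_2,w_3,w_1,w_2,w_3)$, which is the forbidden pattern --- contradiction. For the second assertion, take six consecutive vertices $w_i,\dots,w_{i+5}$ on the facial walk: if all three length-$4$ blocks failed to be all-distinct then, by the characterisation in the first paragraph, $w_i=w_{i+3}$, $w_{i+1}=w_{i+4}$ and $w_{i+2}=w_{i+5}$, again the forbidden pattern. (The statement is vacuous for facial walks of length at most $5$, which by \emph{(i)}+\emph{(ii)} are in any case cycles.)

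The main obstacle is the crux claim, and within it the step ``two occurrences of $y$ on $W$, each flanked by the same pair of edges, force $\deg(y)\leq 2$'': making this precise needs some care with the relationship between corners and rotation systems, and it genuinely uses the accounting fact that every edge is traversed exactly twice over all facial walks, so that an edge occurring twice inside a single facial walk borders no other face. Everything else is bookkeeping on consecutive entries of $W$.
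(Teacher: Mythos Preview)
Your proof is correct and follows essentially the same route as the paper's: both reduce everything to ruling out the pattern $x,y,z,x,y,z$ among six consecutive vertices of a facial walk, after first observing (via \cref{MinDegree3}) that any three consecutive vertices are distinct. The paper simply asserts that this pattern forces $G=K_3$ and hence contradicts $n\geq 4$; you supply the missing justification by the corner-counting argument showing $\deg(y)\leq 2$, which is exactly what underlies the paper's claim. Your handling of the first assertion via the global period-$3$ analysis and the edge-traversal bound ruling out $\ell\geq 9$ is a little more elaborate than necessary (the paper just picks any six consecutive vertices and applies the second assertion), but it is sound.
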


\begin{proof}
If $a,b,c$ are consecutive vertices on a face $F$, then $a,b,c$ are distinct, as otherwise $\deg(b)=1$, which would contradict \cref{MinDegree3}. Thus, if $F$ has length 4 or 5 then all the vertices on $F$ are distinct, and we are done. 
Now assume that $F$ has length at least 6. Let $v_1,\dots,v_6$ be consecutive vertices on $F$. 
If $v_1=v_4$ and $v_2=v_5$ and $v_3=v_6$, then the sequence is $v_1,v_2,v_3,v_1,v_2,v_3$, and the graph is $K_3$ (embedded in a non-orientable surface). Without loss of generality, $v_1\neq v_4$, implying $v_1,v_2,v_3,v_4$ are distinct.
\end{proof}

We noted earlier that Euler genus is additive on components and blocks. The main tool used in our proof is the following more general additivity theorem, proved independently  by several authors.

\begin{thm}[\cite{Miller-JCTB87,Archdeacon-JGT86,Richter-JCTB87}]
\label{Additivity}
If graphs $G_1$ and $G_2$ have at most two vertices in common, then the Euler genus of $G_1\cup G_2$ is at least the Euler genus of $G_1$ plus  the Euler genus of $G_2$.  
\end{thm}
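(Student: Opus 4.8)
Theorem~\ref{Additivity} is the two-vertex extension of the one-vertex amalgamation statement (``Euler genus is additive on blocks'') already in hand, so the plan is to reduce to, and then concentrate on, the genuinely new case. Since Euler genus is additive over connected components, I may assume $G_1$ and $G_2$ are connected; and if $G_1\cup G_2$ is disconnected then $G_1$ and $G_2$ are vertex-disjoint and the statement is exactly that additivity, so I may assume $G:=G_1\cup G_2$ is connected. If $G_1$ and $G_2$ meet in a single vertex $w$, then $w$ is a cut vertex of $G$, the blocks of $G$ are exactly the blocks of $G_1$ together with those of $G_2$, and additivity over blocks finishes this case. Hence it remains to treat $V(G_1)\cap V(G_2)=\{u,v\}$ with $u\neq v$; since any common edge would have both ends in $\{u,v\}$, we have $E(G_1)\cap E(G_2)\subseteq\{uv\}$, and I write $t:=|E(G_1)\cap E(G_2)|\in\{0,1\}$.

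Next I would fix an embedding of $G$ of minimum Euler genus in a surface $\Sigma$, which (as in the proof of \cref{2cell}) may be taken to be $2$-cell. Restricting its rotation system --- together with its signature --- to $E(G_i)$ gives a $2$-cell embedding $\Pi_i$ of $G_i$ in some surface $\Sigma_i$, so the Euler genus of $G_i$ is at most that of $\Sigma_i$; it therefore suffices to show that the Euler genus of $\Sigma$ is at least the sum of those of $\Sigma_1$ and $\Sigma_2$. Writing $f$, $f_1$, $f_2$ for the numbers of faces of the three embeddings, a short computation with Euler's formula --- using $|V(G_1)|+|V(G_2)|=|V(G)|+2$ and $|E(G_1)|+|E(G_2)|=|E(G)|+t$ --- shows that this inequality is equivalent to the face bound
\[
f\ \leq\ f_1+f_2-t .
\]

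To prove the face bound I would use that, in the $2$-cell embedding of $G$, a facial walk can pass from an edge of $G_1$ to an edge of $G_2$ only at a vertex incident to edges of both, hence only at $u$ or $v$: each facial walk is a cyclic concatenation of maximal ``$G_1$-runs'' and ``$G_2$-runs'', and all interaction between the two parts is carried by the cyclic rotations at just these two vertices. The plan is a surgery/exchange argument: modify the minimum embedding, without increasing its Euler genus, so that at each of $u$ and $v$ the $G_1$-edges form one block of the rotation and the $G_2$-edges another; then split $u$ and $v$, cutting $\Sigma$ along a simple closed curve through $u$ and $v$ that separates $G_1$ from $G_2$, and conclude that $\Sigma$ is a connected sum of $\Sigma_1$ and $\Sigma_2$, which yields $f\leq f_1+f_2-t$. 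The case $t=1$ (where $uv\in E(G_1)\cap E(G_2)$) is handled by the same scheme after declaring the edge $uv$ to belong to one of the two parts when defining the runs and the split; this is exactly what changes the right-hand side from $f_1+f_2$ to $f_1+f_2-1$.

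The step I expect to be the main obstacle is the face bound itself: showing that interleaving the two parts at $u$ and $v$ --- and more generally, whatever the local structure of the embedding there --- can only increase the number of handles and crosscaps, never decrease it. This is precisely where the hypothesis of at most two common vertices is essential: with three shared vertices, even a ``disentangled'' minimum embedding need not split off a connected summand, since a ``pair of pants'' can obstruct the separating curve. It is also where the independent proofs of Miller, Archdeacon and Richter do their work, so in a self-contained treatment I would expect to reconstruct one of those arguments here.
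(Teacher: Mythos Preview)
The paper does not prove \cref{Additivity}; it quotes it as a known result of Miller, Archdeacon and Richter and uses it as a black box. So there is no ``paper's own proof'' to compare against.

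As for your sketch: the preliminary reductions (to connected $G_1,G_2$, then to $|V(G_1)\cap V(G_2)|=2$ via additivity over components and blocks) are correct, and the Euler-formula translation of the desired inequality into the face bound $f\le f_1+f_2-t$ is right. You are also right that all the content lives in that face bound, and you say so explicitly. What you have is therefore an outline, not a proof: the disentangling step (``rearrange the rotations at $u$ and $v$ into two blocks without increasing Euler genus, then cut along a separating curve'') is exactly the nontrivial part, and it is not at all automatic. Even after the rotations at $u$ and $v$ are blocked, the curve through $u$ and $v$ that locally separates the two parts need not be a \emph{separating} curve on $\Sigma$, so the ``connected sum'' conclusion does not follow for free; handling the nonseparating case is where the cited papers do their real work. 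So the proposal is a sound plan with a correctly identified, but unfilled, gap.
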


Say a sequence  $v_1,\dots,v_s$ of vertices in a graph $G$ is \emph{ordered} if for each $i\in[1,s]$,
\[ \Bigg|N[v_i] \cap \Bigg( \bigcup_{j=1}^{i-1} N[v_j] \Bigg) \Bigg| \leq 2. \]
Here $N[v_i]$ is the closed neighbourhood $N(v_i)\cup\{v_i\}$. \cref{Additivity} implies the following result. 

\begin{cor}[\citep{JoretWood-JCTB10,NakaOta-JGT95}]
\label{AddCor}
If  $v_1,\dots,v_s$ is an ordered sequence of vertices in a graph $G$, and each $N[v_i]$ is a clique on at least five vertices, then the Euler genus of $G$ is at least $s$. 
\end{cor}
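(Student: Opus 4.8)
The plan is to apply \cref{Additivity} repeatedly, peeling off one closed neighbourhood at a time. For each $i\in[1,s]$ let $H_i:=G[N[v_i]]$. Since $N[v_i]$ is a clique on at least five vertices, $H_i$ contains a $K_5$ subgraph, and $K_5$ is non-planar, so the Euler genus of $H_i$ is at least $1$. Now set $G_i:=H_1\cup H_2\cup\dots\cup H_i$, a subgraph of $G$, so that $V(G_i)=\bigcup_{j=1}^i N[v_j]$. I would prove by induction on $i$ that the Euler genus of $G_i$ is at least $i$; applying this with $i=s$ and using that Euler genus is monotone under taking subgraphs (an embedding of $G$ restricts to one of $G_s$) then yields the corollary.

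For the induction, the base case $i=1$ is immediate since $G_1=H_1$ has Euler genus at least $1$. For the inductive step, observe that
\[ V(G_{i-1})\cap V(H_i)=\Bigg(\bigcup_{j=1}^{i-1}N[v_j]\Bigg)\cap N[v_i], \]
which has size at most $2$ because $v_1,\dots,v_s$ is an ordered sequence. Thus $G_{i-1}$ and $H_i$ have at most two vertices in common, and $G_i=G_{i-1}\cup H_i$, so \cref{Additivity} gives that the Euler genus of $G_i$ is at least the Euler genus of $G_{i-1}$ plus the Euler genus of $H_i$, which is at least $(i-1)+1=i$. This completes the induction, and hence the proof.

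There is no serious obstacle once \cref{Additivity} is available; the argument is essentially bookkeeping, and the two places where the hypotheses enter are exactly visible. The bound ``at least five vertices'' is what guarantees each piece $H_i$ contributes at least $1$ to the genus, and cannot be weakened since a clique on four vertices is planar. The ``ordered'' condition is precisely what keeps the overlap between the already-assembled part $G_{i-1}$ and the next piece $H_i$ down to two vertices, so that \cref{Additivity} applies at every step. One minor point to watch is that $G_i$ need not be the subgraph of $G$ \emph{induced} on $\bigcup_{j\leq i}N[v_j]$ — it is only the union of the $H_j$ — but this is harmless, since all that is needed at the end is that $G_s$ is a subgraph of $G$.
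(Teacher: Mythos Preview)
Your argument is correct and is precisely the intended derivation: the paper does not spell out a proof at all, merely stating that the corollary follows from \cref{Additivity} (with citations), and your induction peeling off one $H_i=G[N[v_i]]$ at a time is exactly how one makes that implication explicit. There is nothing to add.
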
 

We prove in \eqref{FindSequence} that given integers $g \geq 0$ and $s \geq 1$, there is an integer $b$ such that for every bipartite graph $G$ with Euler genus at most $g$, if $(A,B)$ is a  bipartition of $G$ such that $|B|>b$ and every vertex in $B$ has degree at most 4, then $B$ contains an ordered sequence of $s$ vertices. Let $f_g(s)$ be the least such integer $b$. 

We now give some illustrative examples. Since one vertex forms an ordered sequence, $f_g(1)=0$ for each $g \geq 0$. The planar bipartite graph $Q$ shown in \cref{Q} has a colour class $B$ with three vertices, each pair of which has three common neighbours. Thus $B$ contains no ordered sequence of length 2. Thus $f_0(2)\geq 3$. It is easily seen that $f_0(2)\leq 3$ (using a straightforward 
\begin{wrapfigure}{r}{49mm}
\vspace*{-1ex}
\centering
\includegraphics{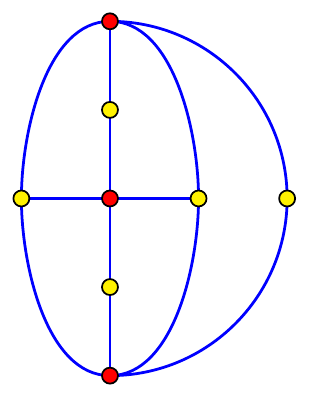}
\captionsetup{width=45mm}
\caption{\label{Q} The graph $Q$.}
\vspace*{-3ex}
\end{wrapfigure}
adaptation of the proof of  \cref{FindPair} below). Thus $f_0(2)=3$. Now consider general $g\geq 0$.  \citet{Ringel65a,Ringel65b} proved that the Euler genus of $K_{3,2g+2}$ equals $g$. If $B$ is the colour class of 
degree-$3$ vertices 
in $K_{3,2g+2}$, then every pair of vertices in $B$ have three common neighbours. Thus $B$ contains no ordered sequence of length $2$, and $f_g(2)\geq 2g+2$. \cref{FindPair} below proves this inequality is tight for $g\geq 1$. These constructions can be combined as follows. Fix $g\geq0$ and $s \geq 2$. Let $G$ be the graph obtained from $K_{3,2g+2}$ by adding $s-2$ disjoint copies of $Q$. Then $G$ has Euler genus $g$, and $G$ has a bipartition $(A,B)$ where $|B| = 2g+2+3(s-2)$ and every ordered sequence in $B$ has at most one vertex from each of the $s-1$ components of $G$. Thus $B$ contains no ordered sequence of length $s$, and 
\begin{equation}
\label{LowerBound}
f_g(s) \geq 2g+3s-4.
\end{equation}

The next lemma motivates the definition of $f_g(s)$.

\begin{lem}
\label{EdgesShort}
Every edge-maximal embedding of a graph $G$ in a surface $\Sigma$ of Euler genus $g  \geq1$ is at most $5f_g (g + 1) - 1$ edges short of a triangulation of $\Sigma$. 
\end{lem}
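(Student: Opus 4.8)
The plan is to reduce to a 2-cell embedding (via \cref{2cell}) and then show that if such an embedding is ``too far'' from a triangulation, one can extract an ordered sequence of $g+1$ vertices with large closed neighbourhoods, contradicting \cref{AddCor}. Fix an edge-maximal 2-cell embedding of $G$ in $\Sigma$ with $n$ vertices; by \cref{MinDegree3} and \cref{FourDistinct} we may assume $G$ is 3-connected with $n\geq 4$. The number of edges short of a triangulation is $3(n+g-2)-|E(G)| = \sum_F(|F|-3)$, summing over all faces $F$ with $|F|$ the facial walk length. So if the embedding is more than $5f_g(g+1)-1$ edges short, then $\sum_F(|F|-3)\geq 5f_g(g+1)$; in particular there are many ``excess'' face-corners to exploit.

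The key idea is to build an auxiliary bipartite graph $H$ whose vertex set is $V(G)\cup\{x_F : F\text{ a non-triangular face}\}$, with $x_F$ adjacent to four distinct consecutive vertices on $F$ (these exist by \cref{FourDistinct}), and possibly weighting/counting so that the ``excess'' $\sum_F(|F|-3)$ controls $|B|$ where $B=\{x_F\}$. Each $x_F$ has degree at most $4$ in $H$. Crucially, $H$ embeds in $\Sigma$ (place $x_F$ inside face $F$ and draw the four edges), so $H$ has Euler genus at most $g$. Now I would apply the defining property of $f_g$: taking $s=g+1$, if $|B|>f_g(g+1)$ then $B$ contains an ordered sequence $x_{F_1},\dots,x_{F_{g+1}}$. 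For such a sequence, $N_H[x_{F_i}]$ is a set of five vertices ($x_{F_i}$ plus four vertices of $G$); but edge-maximality forces the four $G$-vertices on $F_i$ to induce a clique in $G$, and adding $x_{F_i}$ (adjacent to all four) makes $N_H[x_{F_i}]$ a clique on five vertices \emph{in $H\cup\{\text{these clique edges}\}$}. Then \cref{AddCor} applied to this graph gives Euler genus $\geq g+1>g$, a contradiction. The factor $5$ in $5f_g(g+1)-1$ should come from the fact that a single non-triangular face $F$ of length $\ell$ contributes $\ell-3$ to the excess but — using the ``six consecutive vertices'' clause of \cref{FourDistinct} — yields on the order of $(\ell-3)/5$ disjoint choices of four-distinct-vertex windows along $F$, so we can create that many distinct elements of $B$ from $F$ with pairwise ``independent'' neighbourhoods; thus $|B|\geq \frac{1}{5}\sum_F(|F|-3)\geq f_g(g+1)+1 > f_g(g+1)$ whenever the deficiency is at least $5f_g(g+1)-1+1$... let me restate: deficiency $\geq 5f_g(g+1)$ gives $|B|\geq f_g(g+1)+1$.

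The main obstacle I expect is making the auxiliary graph argument honest on two points. First, the ``ordered sequence'' condition requires $|N[x_{F_i}]\cap\bigcup_{j<i}N[x_{F_j}]|\leq 2$; the sequence is guaranteed ordered \emph{within $B$} by the definition of $f_g$, but I must double-check that being ordered in $H$ (where the $x_F$'s are pairwise non-adjacent and have no $G$-edges among themselves being counted) is exactly the hypothesis needed, and that when I augment $H$ by the clique edges on each $N_H[x_{F_i}]$ the sequence remains ordered — this needs the four $G$-vertices of $F_i$ to overlap the earlier closed neighbourhoods in at most two vertices, which is precisely the ``$\leq 2$'' bound, so it is consistent, but the bookkeeping with the added clique edges must be handled carefully (one augments the whole graph at once, or argues the clique edges lie inside the already-embedded faces). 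Second, extracting $\Omega(\tfrac{1}{5}\sum(|F|-3))$ genuinely distinct $x_F$-vertices per long face while keeping degrees $\leq 4$ and keeping the bipartite structure clean: the ``six consecutive vertices'' part of \cref{FourDistinct} is exactly tailored for sliding a length-$4$ window along the facial walk in steps that guarantee distinctness, so I would walk along each facial walk choosing windows greedily, which should give one good window per roughly $5$ vertices of excess. Everything else — Euler's formula bookkeeping, the reduction via \cref{2cell}, and handling the small case $n+g<3$ or $\deg\leq 4$ requirement (the $x_F$'s have degree $4$, fine) — is routine.
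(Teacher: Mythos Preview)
Your proposal is correct and essentially matches the paper's proof: the paper likewise subdivides each non-triangular face of length $t$ (by adding chords $v_0v_i$ for $i\equiv 3\pmod 5$ from a fixed base vertex, which cleanly resolves your second ``obstacle'' about embedding several degree-$4$ vertices in one long face) into $\lfloor (t+2)/5\rfloor$ pieces each containing four distinct vertices, places one new vertex per piece to create a $K_5$ via edge-maximality, and then applies the definition of $f_g$ and \cref{AddCor} exactly as you describe. The only minor points you leave implicit are the verification that $5f_g(g+1)-1\geq 3g$ (needed to invoke \cref{2cell}) and the trivial case of small $n$, both of which the paper dispatches using the lower bound \eqref{LowerBound}.
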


\begin{proof}
Note that $f_g(g+1)\geq 5g-1$ by \eqref{LowerBound}, which implies that $5f_g(g+1)-1 \geq 3g$. Thus, we may assume this embedding is 2-cell by \cref{2cell}. Let $n:=|V(G)|$. If $n \leq 7g$ then the number of edges in a triangulation, $3(n+g-2)$, is at most 
$24g -6< 5(5g-1)-1 \leq 5f_g(g+1)-1$ by \eqref{LowerBound}, and the result holds.  Now assume that $n\geq 7g+1\geq 8$.

By \cref{MinDegree3}, $G$ has minimum degree at least 3. We may assume the embedding of $G$ is not a triangulation.  
Let $G'$ be the embedded pseudograph obtained from $G$ as follows. 
Consider a face $F$ in $G$ with length $t\geq 4$. 
We shall add edges to $G$ across $F$ so that each of the resulting faces in $G'$ contains at least four distinct vertices. 
By \cref{FourDistinct}, $F$ contains four distinct consecutive vertices. 
Let  $(v_0,v_1,v_2,\dots,v_{t-1})$ be a facial walk of $F$, where $v_0,v_1,v_2,v_3$ are distinct. 
Add the edge $v_0v_i$ to $G$ whenever $i\equiv 3\pmod{5}$ and $3\leq i\leq t-5$; this divides $F$ into $\lfloor \frac{t+2}{5} \rfloor$ faces in $G'$ each containing at least four distinct vertices  (since $v_0,v_1,v_2,v_3$ are distinct, and every other face contains six consecutive vertices in $F$, and thus, by \cref{FourDistinct}, has at least four distinct vertices). 
%Finally, for each added edge $v_0v_i$ (where $i>3$) if $v_0=v_i$ then replace the edge $v_0v_i$ by $v_0v_{i-1}$ (noting that $v_0 \neq v_{i-1}$).  Now each of the faces has at least as many distinct vertices as before, and we have not added any loops.

For each non-triangular face $F$ of $G'$, add a vertex inside $F$ adjacent to four distinct vertices of $F$. Let $B$ be the set of these added vertices, and let $G''$ be the resulting embedded graph. Since the embedding of $G$ is edge-maximal, each face of $G$ induces a clique. Thus $N_{G''}[v]$ induces $K_5$ for each $v\in B$.

Consider a non-triangular face $F$ of length $t$ in $G$.
Then $B$ contains exactly $\floor{\frac{t+2}{5}}$ vertices corresponding to $F$. 
Note that $t-3 \leq 5\floor{\frac{t+2}{5}}-1$ edges are sufficient (and necessary) to triangulate $F$. 
Thus the embedding of $G$ can be extended to a triangulation by adding at most $5|B|-1$ edges. 

Let $G'''$ be the induced bipartite subgraph of $G''$ with bipartition $\{B,\cup_{v\in B}N_{G''}(v)\}$. 
By construction, $G'''$ embeds in $\Sigma$ and every vertex in $B$ has degree 4. 

Suppose for a contradiction that $|B|>f_g(g+1)$.
Thus $B$ contains an ordered sequence $v_1,\dots,v_{g+1}$ in $G''$. 
Since $N_{G''}[v_i]$ induces $K_5$, by \cref{AddCor}, the Euler genus of $G''$ is at least $g+1$, which is a contradiction. 
Thus  $|B|\leq f_g(g+1)$. Hence $G$ is at most $5f_g(g+1)-1$ edges short of a triangulation. 
\end{proof}

It remains to show how to find ordered sequences. The next lemma is useful.

\begin{lem}
\label{FindVertex}
Fix an integer $c\geq 7$. 
Let $G$ be a bipartite graph with bipartition $A,B$ and with Euler genus at most $g$. 
If $B$ is non-empty and $|B|> \tfrac{2c}{c-6}(g-2)$, then some vertex in $B$ has at most two neighbours with degree at least $c$.
\end{lem}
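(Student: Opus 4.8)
The plan is to use a counting/averaging argument based on Euler's formula. Let $H$ be the subgraph of $G$ consisting of all edges with at least one endpoint of degree at least $c$; more precisely, let $A'$ be the vertices of $A$ with degree at least $c$ in $G$, and consider the bipartite subgraph $G[A' \cup B]$ together with all edges between them. The key observation is that if \emph{every} vertex in $B$ has at least three neighbours of degree at least $c$ (i.e.\ at least three neighbours in $A'$), then we get a lower bound on $|E(G[A'\cup B])|$ in terms of $|B|$, namely $|E(G[A'\cup B])| \geq 3|B|$. On the other hand, each vertex of $A'$ has degree at least $c$ in $G$; I want to combine this with the Euler-genus bound for bipartite graphs (stated in the excerpt: a bipartite graph on $m\geq 3$ vertices embeddable in a surface of Euler genus $g$ has at most $2(m+g-2)$ edges) to force $|B|$ to be small.

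First I would set $A' := \{ a \in A : \deg_G(a) \geq c \}$ and suppose for contradiction that every vertex of $B$ has at least three neighbours in $A'$. Consider the bipartite graph $G' := G[A' \cup B]$, which embeds in $\Sigma$ and so has Euler genus at most $g$. Counting edges of $G'$ from the $B$ side gives $|E(G')| \geq 3|B|$. Counting from the $A'$ side is more delicate: a vertex $a \in A'$ has degree $\geq c$ in $G$, but some of those neighbours might lie in $A$... no — $G$ is bipartite with parts $A,B$, so all neighbours of $a$ lie in $B$, hence $\deg_{G'}(a) = \deg_G(a) \geq c$. Therefore $|E(G')| \geq c\,|A'|$ as well. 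Now apply $|E(G')| \leq 2(|A'| + |B| + g - 2)$. Combining $3|B| \leq |E(G')|$ and $c|A'| \leq |E(G')|$ with the upper bound: from $|E(G')| \leq 2|A'| + 2|B| + 2g - 4$ and $|A'| \leq |E(G')|/c$ we get $|E(G')| \leq \tfrac{2}{c}|E(G')| + 2|B| + 2g-4$, so $(1 - \tfrac{2}{c})|E(G')| \leq 2|B| + 2g - 4$, giving $|E(G')| \leq \tfrac{c}{c-2}(2|B| + 2g-4)$. Plugging in $|E(G')| \geq 3|B|$ yields $3|B| \leq \tfrac{2c}{c-2}(|B| + g - 2)$, i.e.\ $(3 - \tfrac{2c}{c-2})|B| \leq \tfrac{2c}{c-2}(g-2)$, and $3 - \tfrac{2c}{c-2} = \tfrac{3(c-2) - 2c}{c-2} = \tfrac{c-6}{c-2}$, so $\tfrac{c-6}{c-2}|B| \leq \tfrac{2c}{c-2}(g-2)$, that is $|B| \leq \tfrac{2c}{c-6}(g-2)$. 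Since $c \geq 7$ we have $c - 6 \geq 1 > 0$, so this is a genuine contradiction with the hypothesis $|B| > \tfrac{2c}{c-6}(g-2)$.

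I should be a little careful about edge cases: the bound $|E(G')| \leq 2(m+g-2)$ requires $m = |A'| + |B| \geq 3$; if $|A'| + |B| < 3$ then $|B|$ is tiny and, since $\tfrac{2c}{c-6}(g-2)$ could be negative when $g \leq 1$, the hypothesis $|B| > \tfrac{2c}{c-6}(g-2)$ forces $B$ nonempty and some small-case checking handles it directly (e.g.\ if $B = \{b\}$ and $b$ has three neighbours in $A'$ then $|A'|\geq 3$, so $m \geq 4$). More simply, one can note that if the claimed vertex does not exist then $|A'| \geq 3$ automatically (any $b\in B$ witnesses three vertices of $A'$), so $m \geq 3$ holds and Euler's formula applies.

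The main obstacle is essentially bookkeeping: making sure the two-way edge count is set up so that the degree condition on $A'$ (degree $\geq c$ in the \emph{original} graph $G$) transfers correctly to $G'$, which works precisely because $G$ is bipartite so deleting vertices of $A \setminus A'$ does not remove any edge incident to $A'$. Once that is in place the inequality chain is routine algebra, and the constant $\tfrac{2c}{c-6}$ falls out exactly as stated.
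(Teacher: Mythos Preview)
Your proof is correct and essentially identical to the paper's: define $A'$ as the high-degree vertices in $A$, assume for contradiction every $b\in B$ has three neighbours in $A'$, observe that bipartiteness gives $\deg_{G'}(a)=\deg_G(a)\geq c$ for $a\in A'$, and combine the two lower bounds $c|A'|,\,3|B|\leq |E(G')|$ with the Euler bound $|E(G')|\leq 2(|A'|+|B|+g-2)$. The only cosmetic difference is that the paper eliminates $|A'|$ by taking the weighted sum ($2\times$ first inequality plus $(c-2)\times$ second) whereas you substitute $|A'|\leq |E(G')|/c$ into the Euler bound and then apply $3|B|\leq |E(G')|$; the algebra and the resulting constant $\tfrac{2c}{c-6}$ are the same.
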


\begin{proof}
Let $A'$ be the set of vertices in $A$ with degree at least $c$. 
Suppose for a contradiction that every vertex in $B$ has at least three neighbours in $A'$. 
Double-counting the edges with endpoints in $A'$ and $B$ gives
$c|A'|\leq 2(|A'|+|B|+g-2)$ and $3|B|\leq 2(|A'|+|B|+g-2)$. 
Adding 2 times the first inequality plus $c-2$ times the second inequality gives
$|B|\leq \tfrac{2c}{c-6}(g-2)$, which is the desired contradiction. 
\end{proof}

We have the following recursive upper bound for $f_g(s)$. 

\begin{lem}
\label{Recurence}
For integers $g\geq 1$ and $s\geq 2$ and $c\geq 7$, 
\begin{equation*}
f_g(s) \leq \max\left\{\frac{2c}{c-6}(g-2),2c-3+f_g(s-1)\right\}.
\end{equation*}
\end{lem}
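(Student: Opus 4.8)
The plan is to prove the recurrence by the natural ``peeling'' argument: either $|B|$ is already bounded by the first term $\tfrac{2c}{c-6}(g-2)$ (in which case there is nothing to do, since then $B$ trivially contains no ordered sequence we care about is not the issue — rather, the defining property of $f_g(s)$ is vacuous because the hypothesis $|B|>b$ fails, so any such $b$ works), or $|B|$ exceeds that threshold, and then \cref{FindVertex} (applied with the given $c\geq 7$) supplies a vertex $v\in B$ having at most two neighbours of degree $\geq c$ in $G$. The idea is to peel off $v$ together with a bounded number of other vertices of $B$, reduce $B$ by at most $2c-3$, and invoke the inductive bound $f_g(s-1)$ on what remains to extract an ordered sequence of length $s-1$, which we then prepend with $v$ to get length $s$.

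More precisely, I would argue as follows. Suppose $G$ is bipartite with bipartition $(A,B)$, Euler genus at most $g$, every vertex of $B$ has degree at most $4$, and $|B| > \max\{\tfrac{2c}{c-6}(g-2),\, 2c-3+f_g(s-1)\}$; we must find an ordered sequence of length $s$ in $B$. Since $|B| > \tfrac{2c}{c-6}(g-2)$, \cref{FindVertex} gives $v\in B$ with at most two neighbours of degree $\geq c$. Let $S$ be the set consisting of $v$, the (at most $4$) neighbours of $v$, and — for each neighbour $a$ of $v$ with $\deg(a)<c$ — all the other neighbours of $a$; there are at most $2$ such low-degree neighbours, each contributing at most $c-1$ further vertices of $B$, so $|S\cap B| \leq 1 + 2(c-1) = 2c-1$. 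Hmm — I need the count to come out to $2c-3$, so the bookkeeping must be done carefully: $v$ has at most $4$ neighbours in $A$, at most two of which have degree $\geq c$; deleting $v$ and the $\leq 2$ high-degree neighbours, and then all vertices of $B$ adjacent to the $\leq 2$ remaining (low-degree) neighbours of $v$, removes at most $1 + 2(c-2) = 2c-3$ vertices of $B$ (the ``$c-2$'' because each low-degree neighbour $a$ of $v$ has $\leq c-1$ neighbours, one of which is $v$ itself, already counted). Call the resulting graph $G'$ with bipartition $(A', B')$ where $B' = B\setminus(\text{removed set})$, so $|B'| > f_g(s-1)$, and $G'$ still has Euler genus at most $g$ and every vertex of $B'$ has degree at most $4$ (deleting vertices cannot increase degrees or genus).

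By definition of $f_g(s-1)$, $B'$ contains an ordered sequence $v_1,\dots,v_{s-1}$ in $G'$. I claim $v, v_1,\dots,v_{s-1}$ is an ordered sequence in $G$. The condition for $v$ as the first vertex is automatic ($|N[v]\cap N[v]| = |N[v]|$ is not the constraint; the constraint $\big|N[v_i]\cap\bigcup_{j<i}N[v_j]\big|\leq 2$ is vacuous for $i=1$). For each $v_i$ we must check $\big|N_G[v_i] \cap (N_G[v] \cup \bigcup_{j<i} N_G[v_j])\big| \leq 2$. The intersection with $\bigcup_{j<i}N_{G'}[v_j]$ has size $\leq 2$ by hypothesis in $G'$; and since $v_i\in B'$ was not removed, $v_i$ is not adjacent to any low-degree neighbour of $v$, and $v_i\neq v$, so $N_G[v_i]\cap N_G[v]$ is contained in the set of high-degree neighbours of $v$ — but wait, that could still have size $2$, and combined with the other $2$ we'd get $4$. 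So the removal must be arranged so that $v_i$ shares \emph{at most the two high-degree neighbours} of $v$, and moreover these are among vertices of $A$; I should double-check that the neighbourhoods $N[v_j]$ ($j\geq 1$) also only meet $N[v]$ in those high-degree $A$-vertices, and argue the total is still $\leq 2$. The cleanest fix: also delete the $\leq 2$ high-degree neighbours of $v$ from $A$ when forming $G'$ (they are in $A$, not $B$, so this does not change the count $|B'|$), so that $N_{G'}[v_i] \cap N_G[v] = \emptyset$ for all $i$, and then $N_G[v_i]\cap(N_G[v]\cup\bigcup_{j<i}N_G[v_j])$ differs from $N_{G'}[v_i]\cap\bigcup_{j<i}N_{G'}[v_j]$ only by possibly missing the deleted high-degree $A$-vertices — which only shrinks it. Hence the size is still $\leq 2$, and $v,v_1,\dots,v_{s-1}$ is ordered of length $s$. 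Therefore $f_g(s)$ is at most the stated maximum.

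The main obstacle I anticipate is exactly this neighbourhood-intersection bookkeeping: ensuring that prepending $v$ to the inductively-found sequence never pushes any closed-neighbourhood overlap above $2$, which forces a careful choice of which vertices of $A$ and $B$ to delete (the $A$-deletions being ``free'' for the $|B'|$ count) and a precise degree accounting to land on the constant $2c-3$ rather than something slightly larger. Everything else — the genus and max-degree properties being inherited by subgraphs, the vacuous base of the ordered-sequence condition — is routine.
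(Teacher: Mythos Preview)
Your overall strategy matches the paper's --- use \cref{FindVertex} to find a special vertex, delete a bounded piece, induct --- but there are two genuine gaps.

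First, the minor one: \cref{FindVertex} says $v$ has at most two neighbours of degree $\geq c$; it does \emph{not} say $v$ has at most two neighbours of degree $<c$. If $v$ has four neighbours all of degree $<c$, your deletion count blows up. The paper handles this by letting $X$ be the neighbours of $v_s$ \emph{other than two of highest degree}: then $|X|\leq \deg(v_s)-2\leq 2$, and since at most two neighbours have degree $\geq c$, everything in $X$ automatically has degree $\leq c-1$. That is the trick you are missing to land on $2c-3$.

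Second, and more seriously, you place the special vertex $v$ at the \emph{front} of the sequence, and then argue that for each later $v_i$ the intersection $N_G[v_i]\cap\bigl(N_G[v]\cup\bigcup_{j<i}N_G[v_j]\bigr)$ is still at most $2$. Your claimed justification --- that this set ``differs from $N_{G'}[v_i]\cap\bigcup_{j<i}N_{G'}[v_j]$ only by possibly missing the deleted high-degree $A$-vertices, which only shrinks it'' --- has the inclusion backwards. Passing from $G'$ back to $G$ \emph{restores} deleted vertices, so $N_G[v_i]\supseteq N_{G'}[v_i]$; and you have also added $N_G[v]$ to the union. Concretely, some $v_i\in B'$ may be adjacent in $G$ to both high-degree neighbours of $v$ (nothing in your construction prevents this), while also sharing two $G'$-neighbours with earlier $v_j$'s, giving an intersection of size $4$. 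Deleting the high-degree neighbours from $A$ does not help, because the ordered condition is tested in $G$, not in $G'$.

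The paper avoids this entirely by appending: it sets the special vertex as $v_s$, the \emph{last} term. Then for $i<s$ the ordered condition does not involve $N[v_s]$ at all, and since each surviving $v_i\in B'$ is non-adjacent to every $u\in X$, one has $N_G[v_i]=N_{G'}[v_i]$, so the condition transfers verbatim from $G'$. For $i=s$, the union $\bigcup_{j<s}N_G[v_j]$ lies inside $V(G')$, and $v_s$ has at most two neighbours there (the two retained high-degree ones), so the bound holds. Switching ``prepend'' to ``append'' is what makes the argument go through.
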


\begin{proof}
Let $G$ be a bipartite graph with Euler genus at most $g$, where  $(A,B)$ is  a bipartition of $G$ such that 
$|B|> \max\big\{\frac{2c}{c-6}(g-2),2c-3+f_g(s-1)\big\}$
and every vertex in $B$ has degree at most $4$. 
Our goal is to show  that $B$ contains an ordered sequence of $s$ vertices.
Since $B$ is non-empty and $|B|> \tfrac{2c}{c-6}(g-2)$, by \cref{FindVertex}, some vertex $v_s$ in $B$ has at most two neighbours with degree at least $c$. 
If $\deg(v_s)\leq 2$ then let $X:= \emptyset$. Otherwise, let $X$ be the set of neighbours of $v_s$ other than two of highest degree.  Thus $|X| \leq 2$ and each vertex $u \in X$ has degree at most $c-1$.
Let $G'$ be obtained from $G$ by deleting $N[u]$ for each $u\in X$. 
Let $A',B'$ be the bipartition of $G'$ inherited from $G$.
Note that 
$$|B'|\geq |B| -  (2c-3)  >  \max\left\{\tfrac{2c}{c-6}(g-2),2c-3+f_g(s-1)\right\} -(2c-3) \geq f_g(s-1).$$ 
Thus  $B'$ contains an ordered sequence $v_1,\dots,v_{s-1}$ in $G'$.  
By construction, $v_s$ has at most two neighbours in $G'$. 
Thus $v_1,\dots,v_s$ is an ordered sequence in $G$. 
\end{proof}

%%%%
Since $f_g(1)=0$, \cref{Recurence} implies that for all integers $c\geq 7$ and  $s\geq 1$, 
\begin{equation}
\label{FindSequence}
f_g(s) \leq (2c-3)(s-2) + \max\big\{ \tfrac{2c}{c-6} (g-2), 2c-3\big\}.
\end{equation}
With any choice of $c\geq 7$, this implies that $f_g(g+1)$ is $O(g)$. \cref{EdgesShort} then implies that every edge-maximal embedding in a surface $\Sigma$ of Euler genus $g$ is  $O(g)$ edges short of a triangulation of $\Sigma$. Therefore the graphs embeddable in $\Sigma$ are $O(g)$-impure, which is the main result of this paper (\cref{Summary}). For example, with $c=8$ and $g\geq 4$, 
$$f_g(g+1) \leq 13(g-1) + \max\big\{ 8 (g-2), 13\big\} = 21g-29,$$ and by \cref{EdgesShort} every edge-maximal graph embeddable in a surface of Euler genus $g\geq 4$ is at most $105g-146$ edges short of a triangulation. 

%%%%%%%%%%%%%%%%%%%
\section{Improving the Constants}

To improve the constant in our main result, we first give a precise result for ordered sequences of length 2, improving on the bound in \eqref{FindSequence} with $s=2$.

%\begin{lem}
%\label{FindPair}
%For every bipartite graph $G$ with Euler genus at most $g$, if  $\{A,B\}$ is  a bipartition of $G$ such that $|B|\geq 2g+3$, then  $B$ contains an ordered sequence of two vertices.
%\end{lem}
%
%\begin{proof}
%Before proving the result, we provide a simple argument that almost proves what we want. 
%Let $v$ be a vertex in $B$. The subgraph induced by $B\cup N(v)$ has $|B|+\deg(v)$ vertices and at least $3(|B|-1)+\deg(v)$ edges. Thus
%$3(|B|-1)+\deg(v) \leq 2(|B|+\deg(v)+g-2)$, implying 
%$|B|\leq \deg(v)+2g-1 \leq 2g+d-1$, which is the desired contradiction. 
%\end{proof}
%
%We now prove a 

\begin{lem}
\label{FindPair}
$f_g(2) = 2g+2$ for $g\geq 1$.
\end{lem}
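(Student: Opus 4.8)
The lower bound $f_g(2)\geq 2g+2$ is already in hand from the discussion preceding the statement: in $K_{3,2g+2}$, which has Euler genus $g$ by Ringel, the colour class $B$ of degree-$3$ vertices has every pair sharing three common neighbours, so it contains no ordered sequence of length $2$. I would therefore focus entirely on the upper bound $f_g(2)\leq 2g+2$. The first step is to unwind the definition: for $v_1,v_2$ in the same part of a bipartite graph we have $N[v_1]\cap N[v_2]=N(v_1)\cap N(v_2)$, so $v_1,v_2\in B$ form an ordered sequence exactly when they have at most two common neighbours. Arguing by contraposition, the goal becomes: \emph{if $G$ is bipartite with Euler genus at most $g$, $(A,B)$ is a bipartition in which every vertex of $B$ has degree at most $4$, and every two vertices of $B$ have at least three common neighbours, then $|B|\leq 2g+2$.} The case $|B|\leq 2$ is trivial, so assume $|B|\geq 3$.

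Next I would set up two ``finishing moves''. \emph{Move A}: if every vertex of $B$ has its neighbours inside one fixed set $A''$ with $|A''|\leq 5$ (and each vertex of $B$ has degree exactly $4$), then the bipartite subgraph induced by $B\cup A''$ has at most $|B|+5$ vertices and exactly $4|B|$ edges, so the bound $|E|\leq 2(|V|+g-2)$ for bipartite embedded graphs gives $4|B|\leq 2(|B|+g+3)$, that is $|B|\leq g+3\leq 2g+2$ (here $g\geq 1$ is used). \emph{Move B}: if some fixed triple $T$ satisfies $T\subseteq N(v)$ for all $v\in B$, then $K_{3,|B|}\subseteq G$, so Ringel's value for the Euler genus of $K_{3,n}$ forces $\lceil(|B|-2)/2\rceil\leq g$, hence $|B|\leq 2g+2$.

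The main argument then runs as follows. A vertex of $B$ of degree at most $2$ would share at most two neighbours with any other vertex of $B$, contradicting the hypothesis; so every vertex of $B$ has degree $3$ or $4$. If some $v\in B$ has degree $3$, then $N(v)$ is a triple forced inside every $N(w)$ (as $|N(w)\cap N(v)|\geq 3=|N(v)|$), and Move B applies. Otherwise every vertex of $B$ has degree exactly $4$, and I would analyse the family $\mathcal{F}=\{N(v):v\in B\}$ of $4$-sets, any two of which meet in at least $3$ points. If $|\mathcal{F}|\leq 1$, Move A applies with $A''$ the common neighbourhood. Otherwise, fixing $S_0\in\mathcal{F}$, every other $S\in\mathcal{F}$ arises from $S_0$ by deleting one element $x_S\in S_0$ and inserting one element $y_S\notin S_0$, uniquely determined. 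The structural heart of the proof is a short check: if $x_S\neq x_{S'}$ then necessarily $y_S=y_{S'}$, for otherwise $S\cap S'$ would contain only $2$ points. This forces a dichotomy: either all the $x_S$ coincide, so $S_0$ with that element removed is a triple contained in every member of $\mathcal{F}$, hence in every $N(v)$, and Move B finishes; or all the $y_S$ coincide, say to $y^*$, so every member of $\mathcal{F}$ lies in $S_0\cup\{y^*\}$, a set of size at most $5$, and Move A finishes. In every case $|B|\leq 2g+2$, as required.

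The only genuinely delicate ingredient is this combinatorial dichotomy for families of $4$-sets with pairwise intersections of size at least $3$ (roughly: such a family either has a common triple or lies in a fixed $5$-set); everything else is bookkeeping with Euler's formula and the genus of $K_{3,n}$. I expect the only traps to be the degenerate cases ($|\mathcal{F}|\leq 2$, $|B|\leq 2$, vertices of degree below $4$) and confirming that the final numerical inequality $g+3\leq 2g+2$ really does need $g\geq 1$ — which matches the hypothesis of the lemma.
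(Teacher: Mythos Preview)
Your proof is correct. The overall strategy matches the paper's --- assume every pair in $B$ shares at least three neighbours, deduce a structural constraint on the family of neighbourhoods, and finish with an Euler-formula edge bound on a complete bipartite subgraph --- but the execution differs in a way worth noting.

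The paper first pads with degree-$1$ vertices so that every vertex of $B$ has degree exactly $4$, then uses the Euler bound $4|B|\leq 2(|A|+|B|+g-2)$ together with $|B|\geq 2g+3$ to force $|A|\geq 6$. This count is what lets them find three vertices $a,b,c\in B$ whose neighbourhoods are $X\cup\{a'\}$, $X\cup\{b'\}$, $X\cup\{c'\}$ for a common triple $X$ and three distinct extra points; from there they argue directly that $X\subseteq N(v)$ for every $v\in B$, and finish with $3|B|\leq 2(3+|B|+g-2)$. In particular the paper never lands in your ``all neighbourhoods in a $5$-set'' case, because the inequality $|A|\geq 6$ has already ruled it out --- that inequality is precisely your Move~A rearranged.

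Your route avoids the padding trick, disposes of degree-$3$ vertices in one line, and replaces the explicit $a,b,c$ construction by the clean dichotomy for families of $4$-sets with pairwise intersection at least $3$ (either a common triple, or contained in a common $5$-set). This is arguably tidier and makes the extremal role of $K_{3,2g+2}$ more transparent. On the other hand, the paper's endgame uses only the elementary bipartite Euler bound rather than Ringel's genus formula for $K_{3,n}$; you could do the same in your Move~B, since $3|B|\leq 2(|B|+g+1)$ already gives $|B|\leq 2g+2$ without invoking Ringel.
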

 
\begin{proof}
We proved above that $K_{3,2g+2}$ shows that $f_g(2)\geq 2g+2$ for $g\geq 1$. We now prove the corresponding upper bound. 

Let $G$ be a  bipartite graph $G$ with Euler genus at most $g$. Assume that  $(A,B)$ is  a bipartition of $G$ such that every vertex in $B$ has degree at most $4$ and $|B|\geq 2g+3$. We claim that  $B$ contains an ordered sequence of two vertices. That is, $B$ contains two vertices with at most two common neighbours. Suppose for a contradiction that each pair of vertices in $B$ has at least three common neighbours.

By adding degree-1 vertices in $A$, we may assume that every vertex in $B$ has degree exactly $4$. Without loss of generality, $A = \bigcup_{b \in B} N(b)$. We have $4|B| \leq |E(G)| \leq 2(|A| + |B| + g-2)$ implying $|B| \leq |A| +g-2$ and 
$|A| \geq (2g+3)-(g-2)=g+5\geq 6$. 
 
Let $a, b \in B$ have $N(a) \neq N(b)$. Let $X = N(a) \cap N(b)$ and $Y = N(a) \cup N(b)$.  Then $|X|=3$ 
and $|Y|=5$. Let $N(a)=X \cup \{a'\}$ and $N(b)=X \cup \{b'\}$.  Since $|A| \geq 6$ there is a vertex $c \in B$ 
with $N(c)$ not contained in $Y$.  If $a' \in N(c)$ then $|N(c) \cap N(b)| \leq 2$, so $a' \not\in N(c)$; and similarly 
$b' \not\in N(c)$.  Hence $N(c) \cap Y = X$, so we may write $N(c)=X \cup \{c'\}$.  Note that $a',b',c'$ are distinct 
and not in $X$, so we have symmetry between $(a,a')$, $(b,b')$ and $(c,c')$.
 
Now consider any $v \in B$.  $N(v)$ cannot contain $\{a',b',c'\}$ (since then for example $|N(v) \cap N(a)| \leq 2$); so assume without loss of generality that $c' \not\in N(v)$.  But then we must have $N(v) \cap N(c)=X$, and so $N(v)$ contains $X$.  We have shown that $N(v)$ contains $X$ for each $v \in B$.  But now the induced bipartite graph with parts $X$ and $B$ is complete.  Hence
$3|B| \le 2(3+|B|+g-2)$, implying $|B| \le 2g+2 < 2g+3$.  This contradiction completes the proof.
\end{proof}

\cref{Recurence} and \cref{FindPair} imply that for $g\geq 1$ and $s\geq 2$, 
\begin{equation}
\label{fgb}
f_g(s) \leq
\begin{cases}
2g+2 & \text{ if }s=2,\\
\min\left\{\max\left\{\frac{2c_s}{c_s-6}(g-2),2c_s-3+f_g(s-1)\right\}:c_s\geq 7\right\} & \text{ if }s\geq 3.\\
\end{cases}
\end{equation}

For non-orientable surfaces, \cref{NonOrienTable} shows the optimal choice of $c_3,\dots,c_{g+1}$ in \eqref{fgb} for each value of $g\leq 20$, along with the corresponding lower bound on the number of edges in an edge-maximal graph.

\begin{table}[ht]
\centering
\caption{\label{NonOrienTable}Number of edges in an edge-maximal graph embeddable in a non-orientable surface.}
\begin{tabular}{ccccc}
\hline
$g$ & surface & $c_3,\dots,c_{g+1}$ &  impurity $\leq $ & $|E(G)|\geq$ \\
\hline
$1$ & $\mathbb{N}_{1}$ & $$ & $19$ & $3n-22$\\ 
$2$ & $\mathbb{N}_{2}$ & $7$ & $84$ & $3n-84$\\ 
$3$ & $\mathbb{N}_{3}$ & $7,7$ &  $149$ & $3n-146$\\ 
$4$ & $\mathbb{N}_{4}$ & $8,7,7$ &  $224$ & $3n-218$\\ 
$5$ & $\mathbb{N}_{5}$ & $8,8,7,7$ &  $299$ & $3n-290$\\ 
$6$ & $\mathbb{N}_{6}$ & $9,8,8,7,7$ &  $384$ & $3n-372$\\ 
$7$ & $\mathbb{N}_{7}$ & $9,8,8,7,7,7$ &  $459$ & $3n-444$\\ 
$8$ & $\mathbb{N}_{8}$ & $10,8,8,8,7,7,7$ & $534$ & $3n-516$\\ 
$9$ & $\mathbb{N}_{9}$ & $10,9,8,8,8,7,7,7$ &  $619$ & $3n-598$\\ 
$10$ & $\mathbb{N}_{10}$ & $10,9,8,8,8,8,7,7,7$ & $699$ & $3n-675$\\ 
$11$ & $\mathbb{N}_{11}$ & $11,9,8,8,8,8,8,7,7,7$ & $784$ & $3n-757$\\ 
$12$ & $\mathbb{N}_{12}$ & $11,9,9,8,8,8,8,7,7,7,7$ &  $864$ & $3n-834$\\ 
$13$ & $\mathbb{N}_{13}$ & $11,10,9,8,8,8,8,8,7,7,7,7$ &  $944$ & $3n-911$\\ 
$14$ & $\mathbb{N}_{14}$ & $12,10,9,8,8,8,8,8,8,7,7,7,7$ &  $1024$ & $3n-988$\\ 
$15$ & $\mathbb{N}_{15}$ & $12,10,9,9,8,8,8,8,8,8,7,7,7,7$ & $1109$ & $3n-1070$\\ 
$16$ & $\mathbb{N}_{16}$ & $12,10,9,9,8,8,8,8,8,8,8,7,7,7,7$ &  $1189$ & $3n-1147$\\ 
$17$ & $\mathbb{N}_{17}$ & $13,10,9,9,8,8,8,8,8,8,8,7,7,7,7,7$ & $1269$ & $3n-1224$\\ 
$18$ & $\mathbb{N}_{18}$ & $13,10,9,9,9,8,8,8,8,8,8,8,7,7,7,7,7$ &  $1359$ & $3n-1311$\\ 
$19$ & $\mathbb{N}_{19}$ & $13,11,10,9,9,8,8,8,8,8,8,8,8,7,7,7,7,7$ & $1439$ & $3n-1388$\\ 
$20$ & $\mathbb{N}_{20}$ & $13,11,10,9,9,8,8,8,8,8,8,8,8,8,7,7,7,7,7$ & $1519$ & $3n-1465$\\ 
\hline
\end{tabular}
\end{table}

The next lemma show a method for choosing the constants $c_s$ in \eqref{fgb}. All logarithms are natural.

\begin{lem}
\label{Technical}
Let $\lambda= 25 - 11\left( \tfrac{48332}{114345}+\tfrac{16}{33} \log 2\right) \approx 16.6533\ldots$ to four decimal places. 
Then for $g \geq 2$, 
$$f_g(g+1) \leq  \lambda (g-2) + 2 \Big\lceil\sqrt{\tfrac32(g-2)}\Big\rceil+33$$
\end{lem}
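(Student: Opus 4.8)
## Proof Proposal

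The plan is to optimize the recursion \eqref{fgb} by making a careful choice of the constants $c_3,\dots,c_{g+1}$ and then bounding the resulting sum. First I would unroll the recursion from \cref{FindPair}: starting from $f_g(2)\leq 2g+2$, each step $s\to s+1$ adds at most $2c_{s+1}-3$ to the bound, provided we keep $2c_s-3+f_g(s-1)\geq \tfrac{2c_s}{c_s-6}(g-2)$ so that the maximum in \eqref{fgb} is attained by the second term. Iterating, $f_g(g+1)\leq 2g+2+\sum_{s=3}^{g+1}(2c_s-3)$. The key observation is that the ``max'' constraint forces, at stage $s$, roughly $2c_s-3+f_g(s-1)\geq \tfrac{2c_s}{c_s-6}(g-2)$; since $f_g(s-1)$ grows as we proceed, later stages can afford smaller $c_s$ (closer to $7$), while early stages need larger $c_s$. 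This is exactly the qualitative pattern visible in \cref{NonOrienTable}. The strategy is therefore to treat $s$ as roughly continuous and choose $c_s$ so the constraint is tight, turning the sum into an integral.

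Concretely, set $c_s = 7 + x_s$ for a small positive $x_s$. The tightness condition $2c_s - 3 + f_g(s-1) = \tfrac{2c_s}{c_s-6}(g-2)$, using $\tfrac{2c_s}{c_s-6} = \tfrac{2(7+x_s)}{1+x_s} = 2 + \tfrac{12}{1+x_s}$, and writing $f_g(s-1)\approx 2g+2+\sum_{t\le s-1}(2c_t-3)$, gives after simplification a relation of the form $\sum_{t\le s-1}(2c_t-3) \approx \tfrac{12(g-2)}{1+x_s} + (\text{lower order})$. Differentiating in $s$: $2c_s-3 = 2(7+x_s)-3 = 11+2x_s$, so $\tfrac{d}{ds}\big[\tfrac{12(g-2)}{1+x_s}\big] \approx 11$, i.e. $-\tfrac{12(g-2)}{(1+x_s)^2}\,x_s' \approx 11$. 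This separable ODE integrates to give $x_s$ — and hence $\sum (2c_s-3)$ — in closed form, producing a leading term $\lambda(g-2)$ with $\lambda$ the stated constant $25 - 11\big(\tfrac{48332}{114345}+\tfrac{16}{33}\log 2\big)$; the $\sqrt{\tfrac32(g-2)}$ term arises as the accumulated discretization/boundary correction when one checks that the continuous optimum can be tracked by integer-valued $c_s\ge 7$ with ceiling roundings, and the additive $33$ absorbs the $2g+2$ base term together with constant-order slack.

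The main obstacle is the rigorous passage from the continuous optimization back to a valid choice of integers $c_s\ge 7$: one must verify that rounding the ODE solution $x_s$ to obtain integer $c_s$ never causes the ``max'' in \eqref{fgb} to switch to the first branch $\tfrac{2c_s}{c_s-6}(g-2)$ in a way that breaks the bound, and one must control the cumulative rounding error across $g-1$ stages — this is where the $2\lceil\sqrt{\tfrac32(g-2)}\rceil$ comes from, presumably via a convexity estimate showing the total rounding loss is $O(\sqrt g)$ rather than $O(g)$. I would handle this by defining $c_s$ explicitly as $7 + \big\lceil (\text{closed-form expression in } s,g)\big\rceil$, substituting into the recursion, and bounding each step's contribution by $11 + 2x_s + O(1/\sqrt{g})$ with the $O(1/\sqrt g)$ terms summing to the square-root term. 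A secondary point to be careful about: the recursion requires $c_s\ge 7$ and $s\ge 2$, so one checks the formula is vacuous or directly verified for small $g$ (the lemma only claims $g\ge 2$), and that for $s$ near $g+1$ the forced value $c_s$ indeed hits the floor $7$ consistently with \cref{NonOrienTable}.
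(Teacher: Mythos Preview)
Your plan is the same idea as the paper's: choose the constants $c_s$ in \eqref{fgb} so that the two arguments of the ``max'' are (nearly) equal, note that this forces $c_s$ to decrease from some $k\approx\sqrt{3(g-2)/2}+7$ down to $7$ as $s$ increases, and sum the increments $2c_s-3$. The paper does exactly this, but discretely rather than via an ODE: it sets $\alpha_i=\sum_{j>i}\tfrac{12}{(j-7)(j-6)(2j-3)}$, uses $c_s=i$ for $s\in(\lceil\alpha_i(g-2)\rceil,\lceil\alpha_{i-1}(g-2)\rceil]$, and exploits the exact telescoping identity $\tfrac{2(j-1)}{j-7}-\tfrac{2j}{j-6}=\tfrac{-12}{(j-7)(j-6)}$ to evaluate the sum. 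The constant $\lambda$ arises as $14+11(1-\alpha_7)=25-11\alpha_7$, and $\alpha_7=\tfrac{48332}{114345}+\tfrac{16}{33}\log 2$ is an explicit series evaluation that you have not supplied; your sketch simply asserts the value.

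The genuine gap is the error analysis. You say the discretisation cost is ``presumably via a convexity estimate'' $O(\sqrt g)$, but this is not a convexity phenomenon and a naive count does not give it. The issue is that when the max in \eqref{fgb} flips to the first branch $\tfrac{2c_s}{c_s-6}(g-2)$, the running bound jumps, and one has to show these jumps do not accumulate. The paper handles this with two inductive claims: Claim~1 tracks an explicit error term $E_i$ at each level $i$, and Claim~2 proves by induction on $i$ that $E_i$ can be written as $\sum_j\delta_j\gamma_j$ with the positive coefficients summing to at most $2k-3$ (here $\gamma_j\in[0,1)$ are the fractional parts $\lceil\alpha_j(g-2)\rceil-\alpha_j(g-2)$). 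This telescoping structure of the $\delta_j$ is what caps the total error at $2k-3\le 2\lceil\sqrt{3(g-2)/2}\rceil+11$, and it is the technical heart of the proof; nothing in your outline substitutes for it.
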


\begin{proof}
For $i\geq 7$, let 
$$\alpha_i := \sum_{j=i+1}^\infty\frac{12}{(j-7)(j-6)(2j-3)}.$$
Then $$0.758757\ldots =   \alpha_7 > \alpha_8 > \alpha_9 > \dots .$$

These numbers $\alpha_i$ are used below to calculate the values $c_s$ in \eqref{fgb}. 
For example, $\alpha_7\approx 0.76$ means that $c_s=7$ roughly for $0.76 g \leq s \leq g$, 
and $\alpha_8\approx 0.30$ means that $c_s=8$ roughly for $0.30 g \leq s \leq 0.76g$.
This behaviour is evident in the lower rows of \cref{NonOrienTable}. 
The definition of $\alpha_i$ is designed to minimise the ``max'' operation in \eqref{fgb}. 

%\frac{48332}{114345}+\frac{16 \log 2}{33} =
%and $\alpha_i\rightarrow 0$ as $i\rightarrow \infty$. 
We now upper bound $\alpha_k$. % for $k\geq 12$. 
%Since $(j-7)(j-6)(2j-3)\geq j^3/2$ for $j\geq 12$,
%\begin{align*}
%\alpha_{k-1} & 
% = \sum_{j=k}^\infty\frac{12}{(j-7)(j-6)(2j-3)}\\
%& \leq \sum_{j=k}^\infty\frac{24}{j^3}\\
%& = \sum_{t=1}^\infty\sum_{j=tk}^{(t+1)k-1}\frac{24}{j^3} \\
%& \leq \sum_{t=1}^\infty\sum_{j=tk}^{(t+1)k-1}\frac{24}{(tk)^3} \\
%& = \sum_{t=1}^\infty\frac{24k}{(tk)^3} \\
%& = \frac{24}{k^2}\sum_{t=1}^\infty\frac{1}{t^3} \\
%& = \frac{24\,\zeta(3)}{k^2} \\
%& < \frac{30}{k^2} \enspace.
%\end{align*}
Since $(j-6)(2j-3)\geq 2(j-7)^2$ for $j\geq 7$,
\begin{align*}
\alpha_{k} 
 = \sum_{j=k+1}^\infty\frac{12}{(j-7)(j-6)(2j-3)}
 \leq \sum_{j=k+1}^\infty\frac{6}{(j-7)^3}
 \leq \int_{k+1}^\infty\frac{6}{(j-8)^3}dj
 = \frac{3}{(k-7)^2}
\enspace.
\end{align*}
With $k:= \CEIL{\sqrt{\frac32 (g-2)}}+7$ we have
$(k-7)^2\geq \frac32 (g-2)$ and $\alpha_{k}(g-2) \leq \frac{3}{(k-7)^2}(g-2)\leq2$. 
Let $k$ be the minimum integer such that $\alpha_{k}(g-2) \leq2$. 
Thus $k\leq \CEIL{\sqrt{\frac32 (g-2)}}+7$. 
For $i\in[7,k]$,  define 
$$\beta_i:=\ceil{\alpha_i(g-2)} \quad\text{and}\quad \gamma_i:=\beta_i-\alpha_i(g-2).$$

We claim that  $\beta_k=2$. If not, then $\alpha_k(g-2)\leq 1$ implying 
$$\frac{12}{(k-7)(k-6)(2k-3)} = \alpha_{k-1}(g-2) - \alpha_k(g-2) > 2-1 = 1,$$
which has no solution. Thus $\beta_k=2$.
%$$0<\alpha_{k}(g-2) \leq \alpha_{k-1}(g-2) \leq \frac{30}{k^2}(g-2)\leq2.$$
%Let $k\geq g+3$ be a sufficiently large integer so that $\alpha_k(g-2)\in(0,1)$.  
Define $\beta_{2g+2}:=1$. 

For $i\in[7,2g+2]$, define 
$$L_i :=
\begin{cases}  
(\beta_{i}+1,\beta_{i}+2,\dots,\beta_{i-1}) & \text{ if }i\in[8,2g+2]\\
(\beta_{7}+1,\beta_{7}+2,\dots,g+1) & \text{ if } i=7.
\end{cases}$$
Then $L_{2g+2},\dots,L_7$ is a partition of  $[2,g+1]$. 
Define $\ell_i := |L_i|$. Then for $i\in[8,2g+2]$, 
\begin{align}
\label{elli}
\ell_i 
 = (\alpha_{i-1}-\alpha_i)(g-2)+(\gamma_{i-1} - \gamma_i) = \frac{12(g-2)}{(i-7)(i-6)(2i-3)}  +(\gamma_{i-1} - \gamma_i)
\end{align}
and
\begin{align}
\label{ell7}
\ell_7 = g+1 - \beta_7 
= g+1 - \alpha_7(g-2) - \gamma_7 
= (1 - \alpha_7)(g-2) - \gamma_7 + 3.
\end{align}
It may be that $\ell_i=0$ for some values of $i$. (For example, that there is no 12 in $c_3,\dots,c_{g+1}$ in the final row of \cref{NonOrienTable} corresponds to $\ell_{12}=0$.)\ If $\ell_i>0$ and $i<2g+2$, then let $i^*:=\min\{j>i:\ell_j>0\}$. 
Since $\ell_{2g+2}>0$ this is well-defined. Note that $\ell_j=0$ for $j\in[i+1,i^*-1]$ and $\beta_{i^*}+\ell_{i^*}=\beta_{i}$. 
For $s\in[2,g+1]$, there is a unique integer $i$ such that $\ell_i>0$ and $s\in L_i$, in which case define $c_s :=i$. 
Thus $c_s\geq 7$. 
Note that $s$ can be uniquely written $s=\beta_{i}+z$ for some $i\in[7,2g+2]$ with $\ell_i>0$ and $z\in[1,\ell_i]$. 
These definitions are summarised as follows.
\newpage
\begin{align*}
L_{2g+2} & = (2=\beta_{2g+2}+1)\\
L_{2g+1} & = \emptyset\\
& \vdots\\
L_{k+1} & = \emptyset\\
L_k & = (3=\beta_{k}+1,\beta_{k}+2,\dots,\beta_{k}+\ell_k=\beta_{k-1})\\
& \vdots\\
L_{i^*} & = (\beta_{i^*}+1,\beta_{i^*}+2,\dots,\beta_{i^*}+\ell_{i^*}=\beta_{i})\\
L_{i^*-1} & = \emptyset\\
& \vdots\\
L_{i+1} & = \emptyset\\
L_i & = (\beta_{i}+1,\beta_{i}+2,\dots,\beta_{i}+\ell_i=\beta_{i-1})\\
& \vdots\\
L_8 & = (\beta_{8}+1,\beta_{8}+2,\dots,\beta_8+\ell_8=\beta_{7})\\
L_7 & = (\beta_{7}+1,\beta_{7}+2,\dots,\beta_7+\ell_7=g+1).
\end{align*}
Define
\begin{equation*}
f'_g(s):=
\begin{cases}
2g+2 & \text{ if }s=2,\\
\max\left\{\frac{2c_s}{c_s-6}(g-2),2c_s-3+f'_g(s-1)\right\} & \text{ if }s\geq 3.\\
\end{cases}
\end{equation*}
It follows by induction on $s$ that $f_g(s)\leq f'_g(s)$. Thus to prove the desired upper bound on $f_g(s)$ it suffices to prove the same upper bound on $f'_g(s)$. It is helpful to note that $f'_g(s)$ is calculated by a row-by-row traversal of the above table, where the row corresponding to $L_i$ uses $c_s=i$ in the calculation of $f'_g(s)$. Thus for $s=\beta_i+z$ where $z\in[1,\ell_i]$, 
\begin{equation}
\label{Observe}
f'_g(\beta_i+z) = f'_g(\beta_i+1)+(z-1)(2i-3).
\end{equation}
Thus our focus is on estimating $f'_g(\beta_i+1)$, which equals $\max\big\{\frac{2i}{i-6}(g-2),2i-3+f'_g(\beta_i)\big\}$. 
In Claim~1 below we show that $\frac{2i}{i-6}(g-2)$ is `close' to $2i-3+f'_g(\beta_i)$. 
To do so, define the following recursive `error' function. 
First, let $E_{2g+2}:=0$ and let $E_k:=0$. Then for $i$ such that $\ell_i>0$, let
%=k-1,k-2,\dots,7$ let 
%$$E_i:=\max\left\{ (\gamma_i-\gamma_{i+1})(2i-1)-2+E_{i+1},0\right\}.$$
$$E_i := \max\left\{0 ,  \left( \sum_{j=i+1}^{i^*-1} 2\gamma_{j}\right)  + (2i-1)\gamma_i -(2i^*-3)\gamma_{i^*} + E_{i^*} \right\}.$$

\textbf{Claim 1.} For $s\in[2,g+1]$, if $s=\beta_i+z$ where $z\in[1,\ell_i]$,% and $i^*:=\min\{j>i:\ell_j>0\}$
$$f'_g(\beta_i+z) \leq \frac{2i}{i-6}(g-2)+(z-1)(2i-3)+E_i.$$

\begin{proof}
We proceed by induction on $s$. First consider the base case $s=2$. Then with $i=2g+2$ we have 
$s=\beta_{2g+2}+1= \frac{2i}{i-6}(g-2)$ and the claim holds with $E_{2g+2}=0$. 

Now assume that $s\geq 3$ and the claim holds for $s-1$. 
By \eqref{Observe}, it suffices to consider the $z=1$ case, and we may assume that $\ell_i>0$. 
Then $s-1=\beta_i=\beta_{i^*}+\ell_{i^*}$. 
By induction, 
\begin{align*}
 f'_g(\beta_{i^*}+\ell_{i^*}) 
& \leq \frac{2i^*}{i^*-6}(g-2)+(\ell_{i^*}-1)(2i^*-3)+E_{i^*}\\
& = \frac{2i^*}{i^*-6}(g-2)+\ell_{i^*}(2i^*-3) - (2i^*-3) +E_{i^*}
\end{align*}
Since $\ell_j=0$ for $j\in[i+1,i^*-1]$,
\begin{align*}
 f'_g(\beta_{i^*}+\ell_{i^*})  \leq   \frac{2i^*}{i^*-6}(g-2)+\left( \sum_{j=i+1}^{i^*}(2j-3)\ell_j \right) - (2i^*-3) + E_{i^*}.
\end{align*}
By \eqref{elli} and since $2i-3\leq 2i^*-3$, 
\begin{align*}
& \quad f'_g(\beta_{i^*}+\ell_{i^*})  + 2i-3 \\
 & \leq \frac{2i^*}{i^*-6}(g-2)+ \sum_{j=i+1}^{i^*}(2j-3)\left( \frac{12(g-2)}{(j-7)(j-6)(2j-3)} + \gamma_{j-1}-\gamma_j \right) + E_{i^*}\\
& = \frac{2i^*}{i^*-6}(g-2)+ \left( \sum_{j=i+1}^{i^*} \frac{12(g-2)}{(j-7)(j-6)} \right) + \left( \sum_{j=i+1}^{i^*} (\gamma_{j-1}-\gamma_j)(2j-3)  \right)  + E_{i^*}\\
& = \frac{2i^*}{i^*-6}(g-2)+ (g-2)\left( \sum_{j=i+1}^{i^*} \frac{2(j-1)}{(j-1)-6} - \frac{2j}{j-6} \right) + \left( \sum_{j=i+1}^{i^*} (\gamma_{j-1}-\gamma_j)(2j-3)\right)  + E_{i^*}\\
& = \frac{2i^*}{i^*-6}(g-2)+ (g-2)\left(  \frac{2i}{i-6} - \frac{2i^*}{i^*-6} \right) + \left( \sum_{j=i+1}^{i^*-1} 2\gamma_{j}\right)  + (2i-1)\gamma_i -\gamma_{i^*}(2i^*-3) + E_{i^*}\\
& =    \frac{2i}{i-6}(g-2)  + \left( \sum_{j=i+1}^{i^*-1} 2\gamma_{j}\right)  + (2i-1)\gamma_i -(2i^*-3)\gamma_{i^*}  + E_{i^*}.
\end{align*}
Since $c_s=i$ and by \eqref{elli}, 
\begin{align*}
 f'_g(s) 
& = \max\left\{\frac{2i}{i-6}(g-2),2i-3+f'_g(\beta_{i^*}+\ell_{i^*})\right\} \\
& \leq \max\left\{\frac{2i}{i-6}(g-2),\,\frac{2i}{i-6}(g-2)  + \left( \sum_{j=i+1}^{i^*-1} 2\gamma_{j}\right)  + (2i-1)\gamma_i -(2i^*-3)\gamma_{i^*} + E_{i^*} \right\} \\
& = \frac{2i}{i-6}(g-2) + \max\left\{0, \left( \sum_{j=i+1}^{i^*-1} 2\gamma_{j}\right)  + (2i-1)\gamma_i -(2i^*-3)\gamma_{i^*} + E_{i^*} \right\} \\
& = \frac{2i}{i-6}(g-2) + E_{i} .
\end{align*}
This completes the proof of the claim. 
\end{proof}

We now upper bound the $E_i$.

\textbf{Claim 2.} For $i\in[7,k]$ such that $\ell_i>0$, there are integers $\delta_i,\dots,\delta_k$, such that 
$$E_i\leq \sum_{j=i}^k \delta_j \gamma_j,$$
and if $\Delta_i$ is the multiset $\{\delta_j\geq 0: j\in[i,k]\}$, then $\sum\Delta_i\leq 2k-3$. Moreover, if $E_i>0$ then $\delta_i=2i-1$.

\begin{proof}
We proceed by induction on $i=k,k-1,\dots,2$. In the base case $i=k$, we have $E_k=0$ and the claim holds with $\delta_k=0$ and $X_k=0$. Now assume that $i\in[7,k-1]$ with $\ell_i>0$ and the claim holds for $i^*$. Thus, there are integers 
$\delta_{i^*},\dots,\delta_k$, such that 
$$E_{i^*}\leq \sum_{j=i^*}^k \delta_j \gamma_j,$$
and $\sum\Delta_{i^*}\leq 2k-3$. Moreover, if $E_{i^*}>0$ then $\delta_{i^*}=2i^*-1$. 
By definition, 
$$E_i = \max\left\{0 , (2i-1)\gamma_i + \left( \sum_{j=i+1}^{i^*-1} 2\gamma_{j}\right)   -(2i^*-3)\gamma_{i^*} + E_{i^*} \right\}.$$
If $E_i=0$ then the claim holds with $\delta_i,\dots,\delta_k=0$. 
Now assume that $E_i>0$. 

First suppose that $E_{i^*}=0$. Then
$$E_i = (2i-1)\gamma_i + \left( \sum_{j=i+1}^{i^*-1} 2\gamma_{j}\right)   -(2i^*-3)\gamma_{i^*} .$$
and the claim holds with $\delta_i=2i-1$ and $\delta_{i^*}=-(2i^*-3)$ and $\delta_j=2$ for $j\in[i+1,i^*-1]$, 
in which case $\Delta_i=\{2i-1,(i^*-1-i)\times 2\}$ and $\sum\Delta_i=2i^*-3\leq 2k-3$. 

Now assume that $E_{i^*}>0$. Then $\delta_{i^*}=2i^*-1$ and 
\begin{align*}
E_i 
& \leq  (2i-1)\gamma_i + \left( \sum_{j=i+1}^{i^*-1} 2\gamma_{j}\right)   -(2i^*-3)\gamma_{i^*} + \left(\sum_{j=i^*}^k \delta_j \gamma_j\right)\\
& = (2i-1)\gamma_i + \left( \sum_{j=i+1}^{i^*-1} 2\gamma_{j}\right)   + ((2i^*-1)-(2i^*-3))\gamma_{i^*} + \left(\sum_{j=i^*+1}^k \delta_j \gamma_j\right)\\
& = (2i-1)\gamma_i + \left( \sum_{j=i+1}^{i^*-1} 2\gamma_{j}\right)  + 2\gamma_{i^*} + \left(\sum_{j=i^*+1}^k \delta_j \gamma_j\right).
\end{align*}
Let $\delta_i:=2i-1$ and $\delta_{i^*}:=2$ and $\delta_j:=2$ for $j\in[i+1,i^*-1]$, 
Observe that $$\Delta_i =( \Delta_{i+1} \setminus \{ 2i^*-1\} ) \cup \{2i-1,2,(i^*-1-i)\times 2\}.$$
Thus $\sum\Delta_{i+1} = \sum\Delta_{i}$, which is at most $2k-3$ by assumption. 
Thus the claim is satisfied. 
\end{proof}

Claim 2 with $i=7$ implies that there are integers $\delta_7,\dots,\delta_k$, such that 
$$E_7\leq \sum_{j=7}^k \delta_j \gamma_j,$$
and $\sum\Delta_i\leq 2k-3$. Since $\gamma_j\in[0,1)$, 
$$E_7\leq \sum\Delta_7  \leq 2k-3.$$
Claim 1 and Equation~\eqref{ell7} then imply that for $s=g+1=\beta_7+\ell_7$,
\begin{align*}
f_g(g+1) \leq f'_g(g+1) & \leq 14(g-2)+11( \ell_7-1)+E_7\\
& \leq 14(g-2)+11( (1 - \alpha_7)(g-2) - \gamma_7 + 3  -1)+(2k-3)\\
& \leq 14(g-2)+11( (1 - \alpha_7)(g-2) +2 )+(2k-3)\\
& = (25 - 11\alpha_7)(g-2) +2k+19\\
& = (25 - 11( \tfrac{48332}{114345}+\tfrac{16}{33} \log 2))(g-2) +2k+19\\
& = \lambda (g-2) +2 \Big\lceil\sqrt{\tfrac{3}{2}(g-2)}\Big\rceil +33.
\end{align*}
This completes the proof. 
\end{proof}

Note that \eqref{LowerBound} implies that $f_g(g+1) \geq 5g-1$. Since $ \lambda< \tfrac{50}3$, this shows that \cref{Technical}  is within a factor of $\frac{10}{3}$ of optimal.

\begin{thm}
\label{NonOrient}
For every surface $\Sigma$ of Euler genus $g$, every edge-maximal embedding of a graph in $\Sigma$ is at most $84g$ edges short of a triangulation of $\Sigma$.
\end{thm}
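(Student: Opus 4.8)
The plan is to reduce the statement to the single inequality $5f_g(g+1)-1\leq 84g$ and then feed in \cref{Technical}. If $g=0$ there is nothing new: $84\cdot 0=0$, and (as recalled in the introduction) every edge-maximal embedding of a graph with at least three vertices in the sphere is a planar triangulation. So assume $g\geq 1$. By \cref{EdgesShort}, every edge-maximal embedding of a graph in $\Sigma$ is at most $5f_g(g+1)-1$ edges short of a triangulation, so it suffices to prove $5f_g(g+1)-1\leq 84g$, i.e.\ $f_g(g+1)\leq\tfrac{84g+1}{5}$. For $g=1$ this is immediate from \cref{FindPair}: $f_1(2)=4$, so $5f_1(2)-1=19\leq 84$.

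For $g\geq 2$ I would argue in two regimes. When $g$ is large, apply \cref{Technical}: since $5\lambda\approx 83.27<84$, its bound gives $5f_g(g+1)-1\leq 5\lambda(g-2)+10\lceil\sqrt{\tfrac32(g-2)}\rceil+164$, and using $\lceil x\rceil\leq x+1$ the desired inequality $5f_g(g+1)-1\leq 84g$ follows from $10\sqrt{\tfrac32(g-2)}\leq(84-5\lambda)g+10\lambda-174$. The right-hand side is linear in $g$ with positive slope $84-5\lambda\approx 0.73$, so after squaring this is a quadratic inequality that holds for every $g$ at least some explicit threshold $g_0$ (which turns out to be a few hundred). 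For the finitely many remaining values $2\leq g<g_0$, I would verify $5f_g(g+1)-1\leq 84g$ directly from the recursion \eqref{fgb}: starting from $f_g(2)=2g+2$ and choosing at each length $s$ the constant $c_s\geq 7$ that minimises the right-hand side of \eqref{fgb} (the choices tabulated in \cref{NonOrienTable} for $g\leq 20$, and obtained the same way for $20<g<g_0$), one computes the resulting upper bound on $f_g(g+1)$ in $O(g)$ steps and checks the bound. This last part is a finite, essentially mechanical verification.

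The main obstacle is that the slack $84-5\lambda\approx 0.73$ is thin: the closed-form estimate from \cref{Technical} carries an unavoidable $O(\sqrt g)$ error term (from the accumulated ceilings $\gamma_i$ in its proof), so it only beats $84g$ once $g$ reaches the hundreds. One therefore cannot avoid a genuine finite verification over a sizeable range of small genera, and the delicate points are (i) choosing the constants $c_s$ in \eqref{fgb} carefully enough that the optimised recursion stays below $\tfrac{84g+1}{5}$ throughout that range — i.e.\ replicating for all small $g$ the optimisation illustrated in \cref{NonOrienTable} — and (ii) locating the crossover $g_0$ at which \cref{Technical} takes over. All the structural content (passing to $2$-cell embeddings via \cref{2cell}, to $3$-connected locally Hamiltonian graphs via \cref{MinDegree3}, and counting the inserted degree-$4$ vertices via \cref{AddCor}) is already packaged inside \cref{EdgesShort}, so no further embedding theory is needed here.
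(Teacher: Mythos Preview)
Your proposal is correct and follows essentially the same route as the paper: reduce via \cref{EdgesShort} to $5f_g(g+1)-1\leq 84g$, invoke \cref{Technical} for large $g$ (the paper takes the cutoff at $g_0=300$), and dispose of the remaining finitely many $g$ by direct computation with \eqref{fgb}. Your explicit handling of $g=0$ and $g=1$ and your discussion of why a sizeable finite check is unavoidable are accurate and match what the paper does implicitly.
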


\begin{proof}
By \cref{EdgesShort}, it suffices to show that $5f_g(g+1)-1 \leq 84g$. For $g\leq 299$, this is verified by direct calculation of the upper bound on $f_g(g+1)$ in \eqref{fgb}. For $g\geq 300$, by \cref{Technical}, 
\begin{equation*}
5 f_g(g+1)-1 
\leq 5 \Big(16.6534 (g-2) + 2 \big( 1  + \sqrt{\tfrac32(g-2)}\big)+33\Big) -1 
 \leq 84g.\qedhere
 \end{equation*}
\end{proof}

Note that for each surface $\Sigma$ of Euler genus $g$, \cref{planar} provides examples of edge-maximal 2-cell embeddings of graphs in $\Sigma$ that are $3g$ edges short of a triangulation of $\Sigma$. Thus the 84 in \cref{NonOrient} cannot be reduced to less than 3. Also note that $K_3$, which is edge-maximal embeddable on any surface $\Sigma$, is $3g$ edges short of a triangulation of $\Sigma$ (since every 3-vertex pseudograph triangulation of $\Sigma$ has $3g + 3$ edges).

\subsection{Orientable Surfaces}

Further improvements are possible if we restrict our attention to orientable surfaces. Let $G$ be an edge-maximal graph embeddable in an orientable surface $\Sigma$. Recall from \cref{FourDistinct}  that among six consecutive vertices on a face of $G$, there are at least four distinct vertices, as otherwise a facial walk would contain $abcabc$, implying $\deg(b)=2$. When  $\Sigma$ is orientable, among five consecutive vertices on a face of $G$, there are at least four distinct vertices, as otherwise a facial walk would contain $abcab$, repeating $ab$. This enables us to add more edges to $G'$ in the proof of \cref{EdgesShort}. Consider a face $F$ of $G$ of length $t\geq 4$.  By \cref{FourDistinct}, $F$ contains four distinct consecutive vertices. Let  $(v_0,v_1,v_2,\dots,v_{t-1})$ be a facial walk of $F$, where $v_0,v_1,v_2,v_3$ are distinct. Add the edge $v_0v_i$ to $G'$ whenever $i\equiv 3\pmod{4}$ and $3\leq i\leq t-4$; this divides $F$ into $\lfloor \frac{t+1}{4} \rfloor$ faces in $G'$ each containing four distinct vertices (since $v_0,v_1,v_2,v_3$ are distinct, and every other face contains five consecutive vertices in $F$, and thus has at least four distinct vertices). Define the graph $G''$ and set $B$  as above. Consider a face $F$ of $G$ of length $t\geq 4$. Then $B$ contains exactly $\floor{\frac{t+1}{4}}$ vertices corresponding to $F$, and $t-3 \leq 4\floor{\frac{t+1}{4}}-1$. Thus $G$ can be triangulated by adding at most $4|B|-1$ edges. By the same argument used in the proof of \cref{EdgesShort},  $G$ is at most $4f_g(g+1)-1$ edges short of a triangulation. This leads to the results shown in \cref{OrienTable} (by \cref{fgb}) and the following theorem.
% (by \cref{Technical}).

\begin{thm}
\label{orientable}
For every orientable surface $\Sigma$ of Euler genus $g$, every edge-maximal embedding of a graph in $\Sigma$ is at most $67g$ edges short of a triangulation of $\Sigma$.
\end{thm}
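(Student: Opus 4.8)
The plan is to mirror the proof of \cref{NonOrient}, using the refinement just described: for orientable surfaces every edge-maximal 2-cell embedding can be triangulated by adding at most $4f_g(g+1)-1$ edges (rather than $5f_g(g+1)-1$), so by the argument of \cref{EdgesShort} every edge-maximal embedding in an orientable surface of Euler genus $g$ is at most $4f_g(g+1)-1$ edges short of a triangulation. It then suffices to prove $4f_g(g+1)-1\leq 67g$ for all $g\geq 0$ (the orientable case only has even genus, but the bound $f_g$ is monotone enough that proving it for all $g$ is harmless, and for $g=0$ both sides are handled by $f_0(1)=0$).

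The verification splits into a finite range and an asymptotic tail, exactly as in \cref{NonOrient}. First, for small $g$ — here the threshold will be roughly $g\leq 200$ or so, to be pinned down by the constant $67$ — one checks $4f_g(g+1)-1\leq 67g$ directly from the recursive upper bound \eqref{fgb}, optimising the choice of $c_3,\dots,c_{g+1}$; this is a finite computation analogous to the one producing \cref{NonOrienTable}, and the resulting numbers would be displayed in \cref{OrienTable}. Second, for large $g$ one invokes \cref{Technical}, which gives $f_g(g+1)\leq \lambda(g-2)+2\lceil\sqrt{\tfrac32(g-2)}\rceil+33$ with $\lambda\approx 16.6534$; then
\begin{equation*}
4f_g(g+1)-1 \leq 4\Big(16.6534(g-2)+2\big(1+\sqrt{\tfrac32(g-2)}\big)+33\Big)-1 \leq 67g
\end{equation*}
once $g$ is large enough that the $\sqrt{g}$ term and the additive constants are absorbed by the slack $67-4\lambda\approx 0.39$ per unit of $g$; choosing the finite/asymptotic cutoff consistently with this slack closes the argument.

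The only genuinely new ingredient relative to \cref{NonOrient} is the sharper local structure on orientable surfaces: among five consecutive vertices on a facial walk of an edge-maximal embedded graph there are at least four distinct vertices, since $abcab$ would repeat the edge $ab$ (impossible on an orientable surface, where a facial walk traverses each edge-side once and two equal-direction traversals of $ab$ cannot both occur). This lets one chord a face $F$ of length $t$ by adding edges $v_0v_i$ for $i\equiv 3\pmod 4$, $3\le i\le t-4$, yielding $\lfloor\frac{t+1}{4}\rfloor$ subfaces each with four distinct vertices, hence $|B|$ added degree-$4$ vertices with $t-3\le 4\lfloor\frac{t+1}{4}\rfloor-1$, i.e. at most $4|B|-1$ further edges triangulate; the \cref{AddCor} argument bounding $|B|\le f_g(g+1)$ is then verbatim as before. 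The main obstacle is not conceptual but bookkeeping: getting the finite-range computation and the asymptotic estimate to meet cleanly at the chosen cutoff while keeping the clean round constant $67$, which forces the cutoff and the per-unit slack to be chosen in tandem.
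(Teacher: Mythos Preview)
Your proposal is correct and follows the paper's approach essentially verbatim: the paper likewise reduces to showing $4f_g(g+1)-1\leq 67g$, verifies this by direct computation from \eqref{fgb} for small $g$, and closes the tail with \cref{Technical}. The one point where you are imprecise is the cutoff: the slack $67-4\lambda\approx 0.39$ per unit must absorb roughly $8\sqrt{3g/2}+139$, so the crossover is near $g\approx 670$, not $200$; the paper computes up to $g\le 670$ and applies \cref{Technical} for $g\ge 671$.
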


\begin{proof}
By the above discussion it suffices to show that $4f_g(g+1)-1 \leq 67g$. For $g\leq 670$, this is verified by direct calculation of the upper bound on $f_g(g+1)$ in \eqref{fgb}. For $g\geq 671$, by \cref{Technical}, 
\begin{equation*}
4 f_g(g+1)-1 
\leq 4 \Big(16.6534 (g-2) + 2 \big( 1  + \sqrt{\tfrac32(g-2)}\big)+33\Big) -1 
 \leq 67g.\qedhere
 \end{equation*}
\end{proof}

\begin{table}[ht]
\centering
\caption{\label{OrienTable} Number of edges in an edge-maximal graph embeddable in an orientable surface.}
\begin{tabular}{ccccc}
\hline
$g$ & surface & impurity $\leq$ & $|E(G)|\geq$ \\
\hline
$2$ & $\mathbb{S}_1$ &  $67$ & $3n-67$\\ 
$4$ & $\mathbb{S}_2$ &  $179$ & $3n-173$\\ 
$6$ & $\mathbb{S}_3$ & $307$ & $3n-295$\\ 
$8$ & $\mathbb{S}_4$ & $427$ & $3n-409$\\ 
$10$ & $\mathbb{S}_5$ &  $559$ & $3n-535$\\ 
$12$ & $\mathbb{S}_6$ &  $691$ & $3n-661$\\ 
$14$ & $\mathbb{S}_7$ & $819$ & $3n-783$\\ 
$16$ & $\mathbb{S}_8$ &  $951$ & $3n-909$\\ 
$18$ & $\mathbb{S}_9$ &  $1087$ & $3n-1039$\\ 
$20$ & $\mathbb{S}_{10}$ & $1215$ & $3n-1161$\\ 
$22$ & $\mathbb{S}_{11}$  & $1339$ & $3n-1279$\\ 
$24$ & $\mathbb{S}_{12}$ & $1483$ & $3n-1417$\\ 
$26$ & $\mathbb{S}_{13}$ & $1607$ & $3n-1535$\\ 
$28$ & $\mathbb{S}_{14}$ & $1743$ & $3n-1665$\\ 
$30$ & $\mathbb{S}_{15}$ & $1875$ & $3n-1791$\\ 
$32$ & $\mathbb{S}_{16}$ & $2007$ & $3n-1917$\\ 
$34$ & $\mathbb{S}_{17}$ & $2139$ & $3n-2043$\\ 
$36$ & $\mathbb{S}_{18}$ & $2275$ & $3n-2173$\\ 
$38$ & $\mathbb{S}_{19}$ & $2411$ & $3n-2303$\\ 
$40$ & $\mathbb{S}_{20}$ & $2539$ & $3n-2425$\\ 
\hline
\end{tabular}
\end{table}

\section{Open Problems}

We conclude the paper with a few open problems. 
\begin{itemize}
%\item What is the infimum of all numbers $c$ such that every edge-maximal embedding in a surface $\Sigma$ of Euler genus $g$ is at most $cg$ edges short of a triangulation of $\Sigma$? We have proved a lower bound of 3 and an upper bound of 84. 

\item
Let $c_1$ be the infimum of all numbers $c$ such that every edge-maximal graph embeddable in a
surface $\Sigma$ of Euler genus $g$ is at most $cg$ edges short of a triangulation of $\Sigma$. 
Let $c_2$ be the infimum of all numbers $c$ such that every edge-maximal embedding in a
surface $\Sigma$ of Euler genus $g$ is at most $cg$ edges short of a triangulation of $\Sigma$.
Trivially, $c_1 \leq c_2$.  We have proved that $3 \leq c_1 \leq c_2 <  84$. Can these inequalities be improved?

\item Are projective planar graphs pure?  Are there examples, other than $K_8-E(C_5)$, showing that the class of graphs embeddable in a given surface is impure?  
\item For a surface $\Sigma$, what is the least number $k$ such that for every edge-maximal graph $G$ embeddable in $\Sigma$, there is a triangulation $G'$ of $\Sigma$ with the same vertex set as $G$ such that $E(G)$ and $E(G')$ have symmetric difference of size at most $k$? 
\item If $G$ is embeddable in a surface $\Sigma$, and has sufficiently many vertices but is not edge-maximal, can one always add edges to obtain a triangulation of $\Sigma$?
\end{itemize}

\subsection*{Acknowledgements}

This research was initiated at the 2016 Barbados Graph Theory Workshop and the 2016 Workshop on Probability, Combinatorics and Geometry, both held at Bellairs Research Institute in Barbados. Thanks to the workshop organisers, and to the other participants for creating a stimulating working environment. Thanks to Vida Dujmovi\'c for helpful conversations about this research. 

%%%%%%%%%%%%%%%%%%%%%%%%%
%\bibliographystyle{myNatbibStyle}
%\bibliography{myBibliography,myConferences}

\def\soft#1{\leavevmode\setbox0=\hbox{h}\dimen7=\ht0\advance \dimen7
  by-1ex\relax\if t#1\relax\rlap{\raise.6\dimen7
  \hbox{\kern.3ex\char'47}}#1\relax\else\if T#1\relax
  \rlap{\raise.5\dimen7\hbox{\kern1.3ex\char'47}}#1\relax \else\if
  d#1\relax\rlap{\raise.5\dimen7\hbox{\kern.9ex \char'47}}#1\relax\else\if
  D#1\relax\rlap{\raise.5\dimen7 \hbox{\kern1.4ex\char'47}}#1\relax\else\if
  l#1\relax \rlap{\raise.5\dimen7\hbox{\kern.4ex\char'47}}#1\relax \else\if
  L#1\relax\rlap{\raise.5\dimen7\hbox{\kern.7ex
  \char'47}}#1\relax\else\message{accent \string\soft \space #1 not
  defined!}#1\relax\fi\fi\fi\fi\fi\fi}

\end{document}